\newtheorem{theorem}{Theorem}[section]
\newtheorem{rem}[theorem]{Remark}
\newtheorem{prop}[theorem]{Proposition}
\newtheorem{lemma}[theorem]{Lemma}
\newtheorem{cor}[theorem]{Corollary}
\newtheorem{example}[theorem]{Example}
\def\hess{\operatorname{hess}}
\newcommand{\Sing}{\operatorname{Sing}}
\renewcommand{\O}{{\mathcal O}}
\newcommand{\p}{{\mathbb P}}
\newcommand{\Hm}{\operatorname{H}}
\newcommand{\Bl}{\operatorname{Bl}}
\newcommand{\codim}{\operatorname{codim}}
\newcommand{\Pf}{\operatorname{Pf}}
\newcommand{\Ann}{\operatorname{Ann}}
\newcommand{\rk}{\operatorname{rk}}
\renewcommand{\P}{\mathbb P}
\renewcommand{\vert}{\operatorname{Vert}}
\newcommand{\Jac}{\operatorname{Jac}}
\newcommand{\map}{\dasharrow}
\renewcommand{\vert}{\operatorname{Vert}}
\renewcommand{\th}{\operatorname{th}}
\newcommand{\reg}{\operatorname{reg}}
\newcommand{\Hom}{\operatorname{Hom}}
\renewcommand{\th}{\operatorname{th}}
\newcommand{\Res}{\operatorname{Res}}
\newcommand{\parder}[3][Default]{
	\frac{\partial \ifthenelse{\equal{#1}{Default}}{}{^{#1}}#2}{
              \partial #3 \ifthenelse{\equal{#1}{Default}}{}{^{#1}}}}
\begin{document}


\title[Hypersurfaces with vanishing hessian via Dual Cayley Trick]{Hypersurfaces with vanishing hessian via
Dual Cayley Trick}

\author[R. Gondim]{Rodrigo Gondim$\dagger$}
\address{Universidade Federal Rural de Pernambuco}
\email{rodrigo.gondim.neves@gmail.com}
\author[F. Russo]{Francesco Russo*}
\address{Dipartimento di Matematica e Informatica, Universit\` a degli Studi di Catania, Viale A. Doria 5, 95125 Catania, Italy}
\email{frusso@dmi.unict.it, giovannistagliano@gmail.com}
\author[G. Staglian\` o]{Giovanni Staglian\` o}
\thanks{$\dagger$Partially supported by the CAPES postdoctoral fellowship, Proc. BEX 2036/14-2 and by the FIR2014  {\it Aspetti geometrici e algebrici della Weak e Strong Lefschetz Property} of the University of Catania}
\thanks{*Partially  supported  by the PRIN {\it Geometria delle variet\`{a} algebriche} and by the FIR2014  {\it Aspetti geometrici e algebrici della Weak e Strong Lefschetz Property} of the University of Catania; the author is a member of the G.N.S.A.G.A. of INDAM}
\subjclass[2010]{14J70, 14N15, 14N05}

\begin{abstract}
We present
a general construction of hypersurfaces with vanishing hessian, starting from any irreducible non-degenerate
variety  whose dual variety is a hypersurface and based on the so called  Dual Cayley Trick.
The geometrical  properties of these hypersurfaces 
are different from the series known until now. In particular, their  dual varieties can have arbitrary codimension in the image
of the associated polar map.

\end{abstract}

\maketitle

\section*{Introduction}

If  $X = V(f) \subset \P^N$ is  a  hypersurface,  
the hessian determinant of $f$ (from now on simply  called   the hessian of $f$ or, by abusing language, the hessian  of $X$)  is the determinant of the 
hessian matrix of $f$.

Hypersurfaces with vanishing hessian 
were studied systematically  for the first time in  the fundamental paper \cite{GN}, where  P. Gordan and M. Noether  analysed
O. Hesse's claims in \cite{Hesse1, Hesse2}
according to which these hypersurfaces should be necessarily cones.  Clearly the claim is true if $\deg(f)=2$ 
so that the first relevant case for the problem is that of cubic hypersurfaces. The cubic hypersurface
$V(x_0x_3^2 +  x_1x_3x_4 +  x_2x_4^2)\subset \p^4$ has vanishing hessian but
it is not a cone, see \cite{Perazzo}. By adding the term $\sum_{i=5}^Nx_i^3$ we get examples of
 irreducible cubic hypersurfaces in $\p^N$ with $N\geq 4$ with vanishing hessian 
 which are not cones.

Notwithstanding, the  question is quite subtle
because,  as it was firstly pointed out in \cite{GN},
Hesse's  claim is true for $N\leq 3$,  see also \cite{Lossen, GR} and \cite[Section 7]{Russo}. 
Moreover,  hypersurfaces with vanishing hessian  and the Gordan-Noether Theory developed in \cite{GN} have a  wide range of applications
in different areas of mathematics such as Algebraic and  Differential Geometry (see \cite{AG, AG2, FP}), Commutative Algebra and the theory of EDP (see \cite{DeBondt1, Russo, Rodrigo}), Approximation Theory and Theoretical Physics (see \cite{FP,AG}) and 
Combinatorics (see \cite{GoZa}). 
 
Hypersurfaces with vanishing hessian have been forgotten by  algebraic geometers  for a long time and recently they were rediscovered in other contexts.
For example the cubic hypersurface in $\p^4$ recalled above is  celebrated nowadays in the modern differential geometry literature as the 
{\it Bourgain-Sacksteder Hypersurface} (see \cite{AG2, AG, FP}).  

Many classes of hypersurfaces with vanishing hessian, which are not cones, are ruled by a family of linear
spaces
along which the hypersurface is not developable. In particular, this ruling is different from the one  given by the fibers of the Gauss map. These examples and their generalizations are known in differential geometry as
{\it twisted planes}, see for example \cite{FP}. Despite the huge number of papers dedicated to this subject by differential geometers very few classification or structure results have been obtained.
Moreover, the global point of view provided by polarity, used systematically in this paper, has been completely overlooked in other areas.

The known  series of examples of hypersurfaces $X\subset \p^N$ with vanishing hessian, which are not cones, 
have been  constructed by  Gordan and Noether, Perazzo, Franchetta, Permutti, see \cite{GN, Perazzo, Franchetta, Permutti1, Permutti2},
and later have been revisited and generalised in \cite[Section 2]{CRS}, see also \cite{CuRaSi, CuRaSi2}.

All these examples share several geometrical behaviours, see  in particular Subsection \ref{geopro}. For instance 
there exists  a linear subspace $L \subset \Sing X$, dubbed the {\it core of $X$} in \cite{CRS}, such that, 
letting  $L_{\alpha} = \P^{k+1}\supset L$ and letting 
$L_{\alpha} \cap X = \mu_{\alpha} L \cup X_{\alpha}$ for some $\mu_{\alpha}\in\mathbb N$,  
the variety  $X_{\alpha}$ is a cone with vertex $V_{\alpha} = \operatorname{Vert}(X_{\alpha}) \subset L$ tangent to $Z_X^* \subset L$, where $Z_X\subsetneq (\p^N)^*$ is the closure
of the image of the polar map of $X$. When  the cones $X_{\alpha}$ split into a union of linear spaces, the hypersurface is a twisted plane.
Furthermore,  the dual variety $X^*\subset(\p^N)^*$ of most of the examples in these series 
 tends to be a divisor in  $Z_X$. From the perspective of   Segre's Formula, recalled in Section \ref{prelHess},
 this  means 
that the rank of the hessian matrix of a homogeneous polynomial with vanishing hessian determinant should be equal to the rank of the hessian matrix modulo the ideal generated by $f$, see Section
\ref{prelHess} for precise definitions. This  seemed to be the most natural and general behaviour, at least at a first glance.

On the other hand,  if  
$Y\subset(\p^N)^*$ is an arbitrary  non-degenerate irreducible  variety of  dimension $n\geq 1$, then, after identifying $\p^N$ with $(\p^N)^{**}$,
the dual variety  $X=Y^*\subset\p^N$  is not a cone and, in general,  one expects that  $X$ is a hypersurface with non  vanishing hessian. If this is the case,  $Z_X=(\p^N)^*$ and $\codim(X^*, Z_X)=\codim(Y,(\p^N)^*)=N-n$
is arbitrary large.

These remarks motivate the search of hypersurfaces with vanishing hessian $X\subset\p^N$ such that $X^*$
has arbitrary codimension in $Z_X\subsetneq (\p^N)^*$.
Here we shall  present
a general construction of such hypersurfaces, starting from any irreducible non-degenerate
variety whose dual variety is a hypersurface and  based on the  Dual Cayley Trick.
The geometrical  properties of these hypersurfaces 
are different from those described above (for example $V_\alpha$ is not contained in $L$) and their  dual varieties can have arbitrary codimension in the image
of the polar map. The ubiquity of the examples suggests that the classification of hypersurfaces with vanishing hessian for $N\geq 5$
might be very intricate, perhaps requiring a completely different approach not  based (only) on Gordan--Noether
Theory, which worked for $N\leq 4$, see \cite{GN, Franchetta, GR, Russo}. Other interesting series of examples of hypersurfaces with vanishing hessian such that $\codim(X^*,Z_X)$ is large
 has been recently constructed in \cite{CuRaSi, CuRaSi2} (see also Remark \ref{CuRS} for a possible geometrical description
of this series of examples).

The paper is organised as follow. In Section 1 we fix the notation and introduce the main definitions. Section 2 is devoted to the construction of the first
series of examples, leading to Theorem \ref{codimension} and ending with the description of the geometrical properties of the examples. In Section 3 we
briefly recall the definitions of resultant and discriminant and we apply them to calculate explicitly the dual of rational normal scroll surfaces in Theorem \ref{dualSab}.
Then we introduce the Cayley Trick and the Dual Cayley Trick and apply the Dual Cayley Trick to a generalisation of the series of examples constructed
in Section 2 (see Theorem \ref{codimensionk}), also providing a new conceptual proof of  the part (ii) of Theorem \ref{codimension}. In Section 4 we  prove that the duals of internal projections
from a point of Scorza Varieties have vanishing hessian and we describe the geometrical properties of these hypersuperfaces and of their polar maps.
\medskip

{\bf Acknowledgements}. We  wish to thank the referee for a very careful reading, for pointing out some inaccuracies  and for many useful suggestions leading to a significant improvement of the exposition.

\section[Preliminaries and definitions]{Preliminaries and definitions}\label{prelHess}

 Let  $f(x_0,\ldots,x_N)\in\mathbb  K[x_0,\ldots, x_N]_d$ be a homogeneous polynomial of degree $d\geq 1$  without multiple irreducible factors   and let $X = V(f) \subset \P^N$  be the associated projective  hypersurface. We shall always assume that   $\mathbb K$ is an algebraically closed field
of characteristic zero. Let
$$ \Hm(f)=\left[\frac{\partial^2 f}{\partial x_i \partial x_j}\right]_{0\leq i,j\leq N}$$
be the {\it hessian matrix of $f$}  (or of $X$).

Clearly $\Hm(f)=\mathbf 0_{(N+1)\times(N+1)}$ if and only if $d=1$. Thus, from now on, 
{we shall suppose $d\geq 2$. Let
 $$\hess_f=\det(\Hm(f))$$ 
be the  {\it hessian (determinant) of $f$} (or of $X$, in which case 
it will be denoted by  $\hess_X$, which 
is defined modulo multiplication by a non zero element in~$\mathbb K$). 
\medskip

There are two possibilities:

 \begin{enumerate}
\item either $\hess_f= 0$
\medskip
or
\item $\hess_f\in\mathbb K[x_0,\ldots, x_N]_{(N+1)(d-2)}$.
\end{enumerate}
\medskip

We shall be interested in case (1), that is in {\it hypersurfaces with vanishing hessian} ({\it determinant}).
\medskip

\subsection{The polar map}\label{ss:polarmap}
Let
$$
\nabla_f=\nabla_X:\p^N\dasharrow(\p^N)^{*}$$
be {\it the polar (or gradient) map of $X=V(f)\subset\p^N$} which associates to $p\in \p^N$
the {\it polar hyperplane to $X$ with respect to $p$.} In coordinates it is defined by
$$\nabla_f(p)=\Big(\frac{\partial f}{\partial x_0}(p):\cdots :\frac{\partial f}{\partial x_N}(p)\Big).$$
\index{Polar map $\nabla_f$ o $\nabla_X$}\index{$\nabla_f$}\index{$\nabla_X$}

The base locus scheme of $\nabla_f$ is the scheme

$$\operatorname{Sing}(X):=V\Big(\frac{\partial f}{\partial x_0},\ldots, \frac{\partial f}{\partial x_N}\Big)\subset \p^N.$$

Let
   $$Z_X := \overline{\nabla_f(\P^N)}\subseteq(\p^N)^{*}$$
   be  {\it the  polar image of $\p^N$}.\index{$Z$}\index{Polar image}

We can consider the rational map $\nabla_f$
as the quotient by the natural $\mathbb K^*$-action of the affine morphism
$$\nabla_f:\mathbb K^{N+1}\to\mathbb K^{N+1}$$
defined in the same way. Thus we have
the following key formula:
\begin{equation}\label{KeyFormula}
\Hm(f)=\Jac(\nabla_f:\mathbb K^{N+1}\to\mathbb K^{N+1}),
\end{equation}
that is, the hessian matrix of $f$ is the Jacobian matrix of the affine morphism $\nabla_f$.
Hence, $\hess_f=0$ if and only if  $Z_X\subsetneq \p^N$ so that $\hess_f=0$ if and only if $\frac{\partial f}{\partial x_0}, \frac{\partial f}{\partial x_1}, \ldots, \frac{\partial f}{\partial x_N}$
are algebraically dependent.

The restriction of $\nabla_f$ to $X$ is the Gauss map of $X$:
$$
\begin{array}{cccc}
\mathcal G_X=\nabla_{f|X}:& X&\dasharrow&\mathbb (\p^N)^{*}\\
&X_{\reg}\ni p  &\longmapsto& \mathcal G_X(p)=[T_pX]
\end{array}
$$
which to a non-singular point $p\in X_{\reg}$ associates the point $[T_pX]\in(\p^N)^*$
representing the projective tangent hyperplane $T_pX$ to $X$ at the smooth point $p$.
Then, by definition, 
$$X^*:=\overline{\mathcal G_X(X)}\subseteq Z_X$$
is the {\it dual variety of $X$}.
\medskip

If $A$ is a matrix with entries in $\mathbb K[x_0,\ldots, x_N]$ and if $f\in \mathbb K[x_0,\ldots,x_N]$,
then $\rk_{(f)}A$ denotes the {\it rank of $A$ modulo $(f)$}, that is the maximal order
of a minor not belonging to the ideal generated by $f$. With this notation, obviously $\rk A=\rk_{(0)}A$.
\medskip

\begin{lemma}\label{rkGauss}{\rm (\cite{Segre})}
Let $X=V(f)=X_1\cup\cdots\cup X_r\subset\p^N$ be a reduced hypersurface with $X_i=V(f_i)$,  $f=f_1\cdots f_r$
and $f_i$ irreducible. Then:
\medskip
\begin{enumerate}
\item[(i)] If $p_i\in X_i$ is general, then
\begin{equation}\label{Segreformula}
\rk(\operatorname{d}\mathcal G_X)_{p_i}=\rk_{(f_i)}H(f)-2.
\end{equation}
In particular, 
\begin{equation}\label{Segreformulaeq}
\dim(X_i^*)=\rk(\operatorname{d}\mathcal G_X)_{p_i}=\rk_{(f_i)}H(f)-2\leq \rk H(f)-2= \dim(Z_X)-1.
\end{equation}
\medskip
\item[(ii)] If $X$ is irreducible, then $f^{N-\dim(X^*)-1}$ divides $\hess_f$.
\end{enumerate}
\end{lemma}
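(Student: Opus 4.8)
The plan is to reduce both statements to the affine cones and to combine the identity $\Hm(f)=\Jac(\nabla_f)$ of \eqref{KeyFormula} with Euler's relations. For (i), fix a general point $p$ of the affine cone $\widehat{X_i}=V(f_i)\subset\mathbb K^{N+1}$. Since $X$ is reduced we may assume $f_l(p)\neq 0$ for every $l\neq i$, that $p$ is a smooth point of $\widehat{X_i}$, and that $[p]\in X_{\reg}$. Differentiating $f=f_1\cdots f_r$ one gets $\nabla f(p)=\bigl(\prod_{l\neq i}f_l(p)\bigr)\nabla f_i(p)$, a nonzero vector; hence near $[p]$ the Gauss map $\mathcal G_X$ agrees with the Gauss map of $X_i$, and it admits as affine lift the map $g\colon p'\mapsto\nabla f(p')$, whose differential at $p$ is the linear map $v\mapsto\Hm(f)(p)\,v$ restricted to the hyperplane $H:=T_p\widehat{X_i}=\{v:\nabla f(p)\cdot v=0\}$. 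The map $g$ dominates the affine cone over $X_i^*=\overline{\mathcal G_{X_i}(X_i)}$; since $\chara\mathbb K=0$, generic smoothness gives $\rk\bigl(\Hm(f)(p)|_H\bigr)=\dim X_i^*+1=\rk(\operatorname{d}\mathcal G_X)_{p_i}+1$ for $p$ general.

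It remains to compare $\rk(\Hm(f)(p)|_H)$ with $\rk\Hm(f)(p)$. Euler's identity for $f$ gives $\nabla f(p)\cdot p=d\,f(p)=0$, so $p\in H$; Euler's identity for the degree $d-1$ forms $\partial_jf$ gives $\Hm(f)(p)\cdot p=(d-1)\,\nabla f(p)$. Using that $\Hm(f)(p)$ is symmetric, any $v\in\ker\Hm(f)(p)$ then satisfies $\nabla f(p)\cdot v=\tfrac1{d-1}\,p\tp\Hm(f)(p)\,v=0$, i.e. $\ker\Hm(f)(p)\subseteq H$, whence $\rk(\Hm(f)(p)|_H)=\rk\Hm(f)(p)-1$. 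Since $p$ is a general point of $V(f_i)$ and $(f_i)$ is prime, $\rk\Hm(f)(p)=\rk_{(f_i)}\Hm(f)$. Combining these facts yields $\rk(\operatorname{d}\mathcal G_X)_{p_i}=\dim X_i^*=\rk_{(f_i)}\Hm(f)-2$, which is \eqref{Segreformula}; the chain of (in)equalities in \eqref{Segreformulaeq} follows from $\rk_{(f_i)}\Hm(f)\leq\rk\Hm(f)$ together with $\dim Z_X+1=\rk\Hm(f)$, the latter being \eqref{KeyFormula} applied to the affine $\nabla_f$, whose image closure is the cone over $Z_X$.

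For (ii) let $X$ be irreducible, so $(f)$ is prime; by (i) the generic rank of $\Hm(f)$ on $V(f)$ is $\rho:=\dim X^*+2$, so all $(\rho+1)\times(\rho+1)$ minors of $\Hm(f)$ lie in $(f)$. The statement then reduces to an elementary fact: if $A$ is an $m\times m$ matrix over a UFD $R$ and $f\in R$ is prime with all $(\rho+1)$-minors of $A$ in $(f)$, then $f^{\,m-\rho}$ divides $\det A$. To prove it, pass to the discrete valuation ring $R_{(f)}$ and write $A=PDQ$ with $P,Q\in\mathrm{GL}_m(R_{(f)})$ and $D=\mathrm{diag}(f^{a_1},\dots,f^{a_m})$ up to units (Smith normal form); the hypothesis forces $\#\{i:a_i=0\}\leq\rho$, hence $v_f(\det A)=\sum_i a_i\geq m-\rho$, and since $\det A\in R$ and $R$ is a UFD this gives $f^{\,m-\rho}\mid\det A$ in $R$. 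Applying this with $A=\Hm(f)$, $m=N+1$ and $\rho=\dim X^*+2$ yields $f^{\,N-\dim X^*-1}\mid\hess_f$, the case $\hess_f=0$ being trivial.

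The only delicate point is the first reduction in (i): that the affine lift of $\mathcal G_X$ near $[p]$ is $g\colon p'\mapsto\nabla f(p')$ with image dense in the cone over $X_i^*$, and that its generic rank equals $\dim X_i^*+1$. This rests on $[p]$ being a smooth point of $X$ along which $X=X_i$ locally, on $p$ being a smooth point of $\widehat{X_i}$ with $\nabla f_i(p)\neq0$, and on generic smoothness in characteristic zero. Everything else is formal manipulation with Euler's relations and with ranks of matrices over a domain.
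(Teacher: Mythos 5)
Your proof is correct. Note that the paper itself gives no argument for this lemma: it is quoted from B.~Segre's \emph{Bertini forms and hessian matrices} with a bare citation, so there is no in-text proof to compare yours against. What you wrote is a clean, self-contained version of the classical argument. For (i), the chain is exactly right: on the affine cone, $\nabla f(p)=(\prod_{l\neq i}f_l(p))\,\nabla f_i(p)$ identifies $\mathcal G_X$ with $\mathcal G_{X_i}$ near $[p]$; generic smoothness in characteristic zero gives $\rk(\Hm(f)(p)|_{H})=\dim X_i^*+1$; and the two Euler relations $\nabla f(p)\cdot p=d\,f(p)$ and $\Hm(f)(p)\,p=(d-1)\nabla f(p)$, combined with the symmetry of $\Hm(f)$ and $d\geq 2$, give $p\in H$ and $\ker\Hm(f)(p)\subseteq H$, hence the drop of exactly one in rank upon restriction to $H$ — this is the one genuinely delicate point and you handled it correctly. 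The identification of the pointwise rank at a general point of $V(f_i)$ with $\rk_{(f_i)}\Hm(f)$ uses irreducibility of $f_i$, as you say. For (ii), reducing to the statement that a symmetric matrix whose $(\rho+1)$-minors all lie in the prime $(f)$ has $f^{\,m-\rho}\mid\det$ and proving it via Smith normal form over the DVR $R_{(f)}$ (Fitting ideals being invariant under equivalence, so $a_1+\cdots+a_{\rho+1}\geq 1$ forces at most $\rho$ of the $a_i$ to vanish) is a correct and arguably more transparent route than the minor-expansion inductions one usually sees; the degenerate case $\hess_f=0$ is rightly dismissed. No gaps.
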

\medskip

We point out an immediate consequence for further reference.
\medskip

\begin{cor}\label{XdualinZ} Let $X=V(f)\subset\p^N$ be a reduced hypersurface with vanishing
hessian. Then
$$X^*\subsetneq Z_X\subsetneq (\p^N)^*.$$
\end{cor}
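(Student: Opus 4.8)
The plan is to deduce the corollary directly from Lemma~\ref{rkGauss}, using the key formula \eqref{KeyFormula} together with the characterisation of vanishing hessian in terms of the polar image. Since $X=V(f)$ has vanishing hessian, equation \eqref{KeyFormula} says that $\Hm(f)=\Jac(\nabla_f\colon\mathbb K^{N+1}\to\mathbb K^{N+1})$ is singular as a matrix over the function field, i.e.\ $\rk\Hm(f)\leq N$; equivalently the partial derivatives $\partial f/\partial x_0,\ldots,\partial f/\partial x_N$ are algebraically dependent and hence $Z_X=\overline{\nabla_f(\p^N)}\subsetneq(\p^N)^*$. This already gives the second strict inclusion $Z_X\subsetneq(\p^N)^*$.

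For the first inclusion, recall that by definition $X^*\subseteq Z_X$, so only strictness needs argument. Here I would invoke \eqref{Segreformulaeq}: for a general point $p_i$ on an irreducible component $X_i=V(f_i)$ of $X$ we have
$$\dim(X_i^*)=\rk_{(f_i)}\Hm(f)-2\leq \rk\Hm(f)-2=\dim(Z_X)-1,$$
so $\dim(X^*)=\max_i\dim(X_i^*)\leq \dim(Z_X)-1<\dim(Z_X)$. Combined with $X^*\subseteq Z_X$, a closed subvariety of strictly smaller dimension is a proper subvariety, hence $X^*\subsetneq Z_X$. Stringing the two inclusions together yields $X^*\subsetneq Z_X\subsetneq(\p^N)^*$, as claimed.

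There is essentially no obstacle: the statement is a formal consequence of the two facts already established in the excerpt, namely the interpretation of $\hess_f=0$ as degeneracy of the Jacobian of $\nabla_f$ (giving $Z_X\subsetneq(\p^N)^*$) and Segre's rank formula in Lemma~\ref{rkGauss}(i) (giving the dimension drop $\dim X^*\leq\dim Z_X-1$). The only minor point to be careful about is the reduction to irreducible components when $X$ is reducible, which is handled by taking the maximum over the $X_i$ in \eqref{Segreformulaeq}; the inequality $\rk_{(f_i)}\Hm(f)\leq\rk\Hm(f)$ is immediate from the definition of rank modulo an ideal, since every minor nonzero modulo $(f_i)$ is a fortiori a nonzero polynomial.
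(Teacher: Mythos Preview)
Your proof is correct and follows exactly the approach the paper intends: the corollary is stated there as ``an immediate consequence'' of Lemma~\ref{rkGauss}, and you have simply spelled out that consequence, using the characterisation of $\hess_f=0$ via \eqref{KeyFormula} for $Z_X\subsetneq(\p^N)^*$ and the inequality \eqref{Segreformulaeq} for $X^*\subsetneq Z_X$. Your handling of the reducible case by taking the maximum over components (noting $X^*=\bigcup_i X_i^*$) is the right way to make the implicit step explicit.
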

\medskip

We shall also need the following remark.
\medskip

\begin{lemma}\label{projectionHess}{\rm (\cite[Lemma
3.10]{CRS})}
Let  $X=V(f)\subset\p^N$ be a hypersurface. Let
$H=\p^{N-1}$ be a hyperplane not contained in $X$, let
$h=H^*$ be the corresponding point in $(\p^N)^{*}$ and let
$\pi_h$ denote the projection from the point $h$. Then we have a commutative diagram:
\begin{equation*}
\xymatrix{ H \ar@{-->}[rr]^{\nabla_{X\cap H}} \ar@{-->}[d]_{(\nabla_{X})|_H} & & H^{\ast}\\ (\p^N)^{\ast}\ar@{-->}[rru]_{\pi_h} } 
\end{equation*}
In particular, $\overline{\nabla_{X\cap H}(H)}\subseteq \pi_h(Z_X).$
\end{lemma}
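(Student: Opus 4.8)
The plan is to reduce the lemma to a short computation with partial derivatives after normalising coordinates. First I would choose homogeneous coordinates $x_0,\dots,x_N$ on $\p^N$ so that $H=V(x_N)$; then the dual point is $h=[0:\cdots:0:1]\in(\p^N)^*$, and writing $a_0,\dots,a_{N-1}$ for coordinates on $H^*=(\p^{N-1})^*$ the projection from $h$ becomes the linear map $\pi_h\colon[a_0:\cdots:a_N]\mapsto[a_0:\cdots:a_{N-1}]$. Since $H\not\subseteq X$, the form $g:=f(x_0,\dots,x_{N-1},0)\in\mathbb K[x_0,\dots,x_{N-1}]_d$ is nonzero and cuts out $X\cap H$ inside $H\cong\p^{N-1}$, so that $\nabla_{X\cap H}$ is a genuine (non-constant) rational map.

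Next I would compare the two rational maps $\pi_h\circ(\nabla_X)|_H$ and $\nabla_{X\cap H}$ from $H$ to $H^*$. For $p=[p_0:\cdots:p_{N-1}:0]\in H$ one has $(\nabla_X)|_H(p)=\bigl(\tfrac{\partial f}{\partial x_0}(p):\cdots:\tfrac{\partial f}{\partial x_N}(p)\bigr)$ with all partials evaluated at $x_N=0$, hence $\pi_h\bigl((\nabla_X)|_H(p)\bigr)=\bigl(\tfrac{\partial f}{\partial x_0}(p):\cdots:\tfrac{\partial f}{\partial x_{N-1}}(p)\bigr)$. On the other hand $\nabla_{X\cap H}(p)=\bigl(\tfrac{\partial g}{\partial x_0}(p):\cdots:\tfrac{\partial g}{\partial x_{N-1}}(p)\bigr)$, and the chain rule gives $\tfrac{\partial g}{\partial x_i}=\bigl(\tfrac{\partial f}{\partial x_i}\bigr)\big|_{x_N=0}$ for $0\le i\le N-1$ (differentiating the constant last slot contributes nothing). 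So the two maps are defined by the very same tuple of polynomials and therefore coincide as rational maps, which is precisely the commutativity of the diagram. The same statement admits the coordinate-free reading that a tangent hyperplane to the hyperplane section $X\cap H$ is the trace on $H$ of a tangent hyperplane to $X$ passing through the point $h=H^*$.

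For the last assertion it suffices to restrict to the dense open subset of $H$ on which $\nabla_X$, $(\nabla_X)|_H$ and $\pi_h$ are all defined: there $\nabla_{X\cap H}(p)=\pi_h(\nabla_X(p))$ with $\nabla_X(p)\in\nabla_f(\p^N)\subseteq Z_X$, so $\nabla_{X\cap H}$ maps that open set into $\pi_h(Z_X)$, and taking closures gives $\overline{\nabla_{X\cap H}(H)}\subseteq\overline{\pi_h(Z_X)}=\pi_h(Z_X)$ (the last set being the closure of the image of $Z_X\setminus\{h\}$ under the linear projection, hence a subvariety of $H^*$).

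I do not anticipate a genuine obstacle: the mathematical content is just the identity $\partial(f|_{x_N=0})/\partial x_i=(\partial f/\partial x_i)|_{x_N=0}$, and the only bookkeeping is that $g\neq 0$, which is the sole place the hypothesis $H\not\subseteq X$ enters and guarantees both that $\nabla_{X\cap H}$ makes sense and that $(\nabla_X)|_H$ does not collapse $H$ to the indeterminacy point $h$ of $\pi_h$ (otherwise $\partial g/\partial x_i\equiv 0$ for all $i$ and the Euler relation in characteristic zero would force $g=0$), so that the composition $\pi_h\circ(\nabla_X)|_H$ is a legitimate rational map.
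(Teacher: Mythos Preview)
Your argument is correct and is exactly the standard proof: normalise so that $H=V(x_N)$, observe that $\partial(f|_{x_N=0})/\partial x_i=(\partial f/\partial x_i)|_{x_N=0}$ for $i\le N-1$, and read off the commutativity; you also handle the domain-of-definition issue cleanly via Euler's relation. Note, however, that the paper does not supply its own proof of this lemma---it merely quotes the statement from \cite[Lemma~3.10]{CRS}---so there is nothing to compare against here; your write-up is in fact the natural argument one would give, and it matches the proof in the cited reference.
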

\medskip

\section{Hypersurfaces with vanishing hessian constructed from any non-degenerate variety}

\subsection{Hypersurfaces with vanishing hessian constructed from duals of arbitrary non-degenerate subvarieties}

Let us consider $\p^{2n+1}$ with homogeneous coordinates
$$(u:v:x_1:\dots:x_n:y_1:\cdots:y_n),$$
$\p^1$ with homogeneous coordinates $(s:t)$ and $\p^{n-1}_{\mathbf z}$ with homogeneous
coordinates $(z_1:\cdots:z_n)$, where $\mathbf z=(z_1,\ldots,z_n)$.
Let  $\mathbf x=(x_1,\ldots, x_n)$, let $\mathbf y=(y_1,\ldots, y_n)$,
 let $$\phi_1:\p^{2n+1}\map\p^1$$ be the rational map defined by
$$\phi_1(u:v:\mathbf x:\mathbf y)=(u:v)$$
and let $\phi_2:\p^{2n+1}\map\p^{n-1}_{\mathbf z}$ be the rational map defined by
\begin{equation}\label{phi2}
\phi_2(u:v:\mathbf x:\mathbf y)=(ux_1-vy_1:\cdots:ux_n-vy_n).
\end{equation}

Let $g(z_1,\ldots, z_n)\in\mathbb K[z_1,\ldots,z_n]_d$ be a reduced irreducible polynomial such that 
 the associated  irreducible hypersurface of degree $d$
 $$Y^*=V(g)\subset\p^{n-1}_{\mathbf z}$$
 is  not a cone. This is equivalent 
 to
$$Y=Y^{**}=\overline{\nabla_g(Y^*)}\subset(\p^{n-1}_{\mathbf z})^* $$ 
being non-degenerate.

Let
$$f(u,v,\mathbf x,\mathbf y)=g(ux_1-vy_1,\ldots, ux_n-vy_n)\in \mathbb K[u,v,\mathbf x,\mathbf y]_{2d}$$
and let $X=V(f)\subset\p^{2n+1}$. Clearly,
\begin{equation}\label{XfromY*}
V(f)=\overline{\phi_2^{-1}(V(g))}.
\end{equation}

The partial
derivatives of $f$ are  linearly independent over $\mathbb K$, due to the hypothesis on $g$, so that  $X=V(f)\subset\p^{2n+1}$ is not a cone. 
One verifies that 
$$\Sing(X)=V(u,v)\cup (\p^1\times\p^n)\cup \phi_2^{-1}(\Sing(V(g))),$$
where $\p^1\times\p^n\subset\p^{2n+1}$ is the Segre variety defined by the equations 
$$
\rk
\left( \begin{array}{cccc}
v&x_1&\ldots&x_n\\
u&y_1&\ldots&y_n
\end{array}
\right)
=1.$$

From 
\begin{equation}\label{dxi}
\frac{\partial f}{\partial x_i}=u\frac{\partial g}{\partial z_i}(u\mathbf x-v\mathbf y)
\end{equation}
and 
\begin{equation}\label{dyj}
\frac{\partial f}{\partial y_j}=-v\frac{\partial g}{\partial z_j}(u\mathbf x-v\mathbf y),
\end{equation}
we deduce that, for every $i\neq j$, 
\begin{equation}\label{ij-ji}
\frac{\partial f}{\partial x_i}\frac{\partial f}{\partial y_j}-\frac{\partial f}{\partial x_j}\frac{\partial f}{\partial y_i}=0.
\end{equation}

Thus $X\subset\p^{2n+1}$ has vanishing hessian since the partial derivates of $f$ are algebraically dependent.

\subsection{Polar image and dual variety of $X=V(g(u\mathbf x-v\mathbf y))\subset\p^{2n+1}$}

We need to introduce some more notation.
Let $$(u':v':x'_1:\dots:x'_n:y'_1:\cdots:y'_n)$$ be homogenous coordinates on $(\p^{2n+1})^*,$ dual to the coordinates chosen on $\p^{2n+1}$. 
Let $$L'=V(x_1',\ldots,x'_n,y'_1,\ldots, y'_n)=\p^1_{u',v'}\subset (\p^{2n+1})^*$$ and let $$W=\p^1\times\p^{n-1}\subset V(u',v')=\p^{2n-1}_{\mathbf x',\mathbf y'}\subset(\p^{2n+1})^*$$
be the Segre variety defined by the equations:
\begin{equation}\label{eqW}
\rk
\left( \begin{array}{ccc}
x_1'&\ldots&x'_n\\
y'_1&\ldots&y'_n
\end{array}
\right)
=1.
\end{equation}

Let  $S(L',W)\subset(\p^{2n+1})^*$ be  the  cone with vertex $L'$ over the Segre variety $W=\p^1\times\p^{n-1}$.
Thus $\dim(S(L',W))=n+2$ and $S(L',W)\subset(\p^{2n+1})^*$ is defined by  the equations \eqref{eqW} of $W$. 

Let $\p^{n-1}_{\mathbf z'}$ with homogeneous coordinates $(z'_1:\cdots:z'_n)$ be the dual of $\p^{n-1}_{\mathbf z}$ and consider $\p^n_{\mathbf z'}$ with homogeneous coordinates $(z'_0:z'_1:\cdots:z'_n)$. With this notation   $\p^{n-1}_{\mathbf z'}\subset\p^n_{\mathbf z'}$ is  the hyperplane of equation $z'_0=0$. Given $Y\subset\p^{n-1}_{\mathbf z'}$, let $\widetilde Y\subset\p^n_{\mathbf z'}$ be the cone
over $Y$ with vertex $(1:0:\cdots:0)$.

\begin{theorem}\label{codimension} Let the hypothesis and the notation be as above,  let $$Z_{Y^*}=\overline{\nabla_g(\p^{n-1}_{\mathbf z})}\subseteq\p^{n-1}_{\mathbf z'},$$ 
let
$$\p^1\times Z_{Y^*}\subset(\p^{2n-1})^*$$ be the Segre embedding and let
$X=V(g(u\mathbf x-v\mathbf y))\subset\p^{2n+1}$. Then:
\medskip
\begin{enumerate}
\item[(i)] $Z_X=\overline{\nabla_f(\p^{2n+1})}=S(L',\p^1\times Z_{Y^*})\subset(\p^{2n+1})^*$;
\medskip
\item[(ii)] $X^*=\p^1\times\widetilde Y\subset(\p^{2n+1})^*$ Segre embedded;
\medskip
\item[(iii)] $\codim(X^*,Z_X)=\codim(Y, Z_{Y^*})+1.$
\end{enumerate}
\medskip

In particular, if $g(\mathbf z)$ has non-vanishing hessian determinant, then $Z_{Y^*}=\p^{n-1}_{\mathbf z'}$
and $\codim(X^*,Z_X)=\codim(Y, \p^{n-1})+1.$
\end{theorem}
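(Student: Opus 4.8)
The plan is to compute the polar map $\nabla_f$ of $X=V(f)$ explicitly using the chain rule and the partial derivatives \eqref{dxi}, \eqref{dyj}, together with the derivatives $\frac{\partial f}{\partial u}=\sum_i x_i\frac{\partial g}{\partial z_i}(u\mathbf x-v\mathbf y)$ and $\frac{\partial f}{\partial v}=-\sum_i y_i\frac{\partial g}{\partial z_i}(u\mathbf x-v\mathbf y)$, and then identify the closure of the image. For (i), I would fix a general point $(u:v:\mathbf x:\mathbf y)$ and let $w_i=\frac{\partial g}{\partial z_i}(u\mathbf x-v\mathbf y)$ be the coordinates of $\nabla_g$ at the point $\phi_2(u:v:\mathbf x:\mathbf y)$; then $\nabla_f$ has image point with $x'$-block $=u\,\mathbf w$, $y'$-block $=-v\,\mathbf w$, and $u'=\langle\mathbf x,\mathbf w\rangle$, $v'=-\langle\mathbf y,\mathbf w\rangle$. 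The $(\mathbf x',\mathbf y')$-part visibly lies on $\p^1\times Z_{Y^*}$ (the $\p^1$ factor being $(u:-v)$ and the $Z_{Y^*}$ factor being $\mathbf w$, which ranges over a dense subset of $Z_{Y^*}$ as $\phi_2$ is dominant), while letting $(u',v')$ vary freely by moving the fiber of $\phi_2$ shows the image fills out the cone $S(L',\p^1\times Z_{Y^*})$. A dimension count — $\dim Z_X=n+1+\dim Z_{Y^*}$ versus the generic fiber dimension of $\nabla_f$, which is the fiber dimension of $\phi_2$ (namely $n+1$) plus that of $\nabla_g$ — pins down equality.

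For (ii), the cleanest route is via the Gauss map: by Lemma \ref{rkGauss}(i) and the reflexivity of dual varieties, $X^*=\overline{\mathcal G_X(X)}$, and for a general smooth $p\in X$ the tangent hyperplane $T_pX$ is $\nabla_f(p)$, so $X^*$ is the closure of $\nabla_f$ restricted to $X$. A general point of $X=\overline{\phi_2^{-1}(V(g))}$ maps under $\phi_2$ to a general point of $Y^*=V(g)$, at which $\nabla_g$ is the Gauss map of $Y^*$, hence lands on a general point of $Y=Y^{**}$. Running the formula above on such a point: now $v=v'$-constraints still give the $\p^1$ factor $(u:-v)$, the $\mathbf z'$-block is $\mathbf w\in Y$, but there is an extra coordinate $z'_0$ coming from $(u',v')=(\langle\mathbf x,\mathbf w\rangle, -\langle\mathbf y,\mathbf w\rangle)$ which, using Euler's relation $\langle u\mathbf x-v\mathbf y,\mathbf w\rangle=d\,g(u\mathbf x-v\mathbf y)=0$ on $X$, collapses to a single free parameter — this is exactly the cone point $(1:0:\cdots:0)$, so the image is $\p^1\times\widetilde Y$. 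Part (iii) is then immediate arithmetic: $\dim S(L',\p^1\times Z_{Y^*})=n+2+\dim Z_{Y^*}$ by construction and $\dim(\p^1\times\widetilde Y)=1+1+\dim Y=2+\dim Y$, so $\codim(X^*,Z_X)=(n+\dim Z_{Y^*})-(1+\dim Y)=\codim(Y,Z_{Y^*})+1$, where I use $\dim Z_{Y^*}=n-1$ exactly when $g$ has non-vanishing hessian.

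The main obstacle I anticipate is part (ii): one must be careful that the points of $X$ at which we evaluate the Gauss map are genuinely general smooth points (avoiding the three components of $\Sing X$ listed above), and that the identification of the image with the honest Segre embedding $\p^1\times\widetilde Y$ — rather than with some projection or a variety of smaller dimension — holds on the nose. The delicate point is the role of $z'_0$: Euler's identity forces $\langle\mathbf x,\mathbf w\rangle$ and $\langle\mathbf y,\mathbf w\rangle$ to satisfy $u\langle\mathbf x,\mathbf w\rangle-v\langle\mathbf y,\mathbf w\rangle=0$, so that $(\langle\mathbf x,\mathbf w\rangle:-\langle\mathbf y,\mathbf w\rangle)=(u:-v)$ coincides with the $\p^1$ factor, which is precisely what makes the extra direction a genuine cone vertex and lands the image in $\widetilde Y$; verifying that this parameter is actually attained (not identically zero) requires exhibiting one point of $X$ off the Segre variety $\p^1\times\p^n$, which is where $g$ not being a cone is used once more. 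Everything else — the chain-rule computations, the dominance of $\phi_2$, and the dimension bookkeeping — is routine, and the final "in particular" clause follows by substituting $Z_{Y^*}=\p^{n-1}_{\mathbf z'}$ into (i) and (iii).
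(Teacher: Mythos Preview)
Your overall strategy matches the paper's: compute $\nabla_f$ via the chain rule, use the quadratic relations among the $x_i',y_j'$ coordinates for the inclusion in (i), and invoke Euler's identity on $X$ for (ii). Part (ii) in particular is essentially the paper's argument, and your observation that the Euler relation $u\langle\mathbf x,\mathbf w\rangle-v\langle\mathbf y,\mathbf w\rangle=d\,g(u\mathbf x-v\mathbf y)=0$ on $X$ forces $(u',v')$ to line up with the Segre factor and leaves one free parameter (the cone direction in $\widetilde Y$) is exactly right.

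The gap is in your surjectivity argument for (i). The paper proves the equality $Z_X=S(L',\p^1\times Z_{Y^*})$ by taking a general point of the cone and explicitly solving for a preimage (reducing to a quadratic equation for $u$ via Euler's formula for $g$). You instead propose a fiber-dimension count, but the specific count is wrong on two levels. First, $\nabla_f$ does not factor in a way that makes ``fiber of $\phi_2$ plus fiber of $\nabla_g$'' the correct decomposition: the extra data $(u',v')$ and the $\p^1$ Segre factor $(u:-v)$ impose further constraints on the fiber, so the pieces are not additive. Second, the arithmetic is off throughout --- the generic fiber of $\phi_2$ has dimension $n+2$, not $n+1$; the cone $S(L',\p^1\times Z_{Y^*})$ has dimension $3+\dim Z_{Y^*}$, not $n+2+\dim Z_{Y^*}$; and your formula in (iii), $(n+\dim Z_{Y^*})-(1+\dim Y)$, gives $\codim(Y,Z_{Y^*})+n-1$ rather than $\codim(Y,Z_{Y^*})+1$. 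With the correct dimensions, $(3+\dim Z_{Y^*})-(2+\dim Y)=\codim(Y,Z_{Y^*})+1$ drops out immediately. A dimension approach \emph{does} work --- the paper's Remark after the theorem sketches it by computing $\rk \Hm(f)=\rk \Hm(g)+3$ directly from the block structure of the Hessian --- but your fiber argument as stated does not establish the needed lower bound on $\dim Z_X$.
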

\begin{proof}
By definition  $\nabla_f:\p^{2n+1}\map(\p^{2n+1})^*$ is given by
$$(\frac{\partial f}{\partial u}:\frac{\partial f}{\partial v}:\frac{\partial f}{\partial x_1}:\cdots:\frac{\partial f}{\partial x_n}:\frac{\partial f}{\partial y_1}:\cdots:\frac{\partial f}{\partial y_n}).$$

From \eqref{ij-ji} and from \eqref{eqW} we deduce that 
\begin{equation}\label{in1}
\overline{\nabla_f(\p^{2n+1})}\subseteq S(L',\p^1\times Z_{Y^*}).
\end{equation}

We also have
\begin{equation}\label{du}
\frac{\partial f}{\partial u}=\sum_{i=1}^nx_i\frac{\partial g}{\partial z_i}(u\mathbf x-v\mathbf y),
\end{equation}
\begin{equation}\label{dv}
\frac{\partial f}{\partial v}=-\sum_{j=1}^ny_j\frac{\partial g}{\partial z_j}(u\mathbf x-v\mathbf y).
\end{equation}

Let
$$p=(\tilde{u}':\tilde{v}': \tilde{\mathbf z}':\lambda\tilde{\mathbf z}')\in S(L',\p^1\times Z_{Y^*})$$
be a general point. In particular, we can suppose  $\tilde u'\neq\lambda \tilde v'$ and that  $\tilde{\mathbf z}'\neq\mathbf 0$. Then $[\tilde{\mathbf z}']\in Z_{Y^*}$ is general and by definition
there exists $\tilde{\mathbf z}\in\p^{n-1}_{\mathbf z}$ such that $\nabla_g(\tilde{\mathbf z})=\tilde{\mathbf z}'$.
Looking at \eqref{dxi} and \eqref{dyj}, we impose $v=-\lambda u$. If $u(\mathbf x+\lambda\mathbf y)=\tilde{\mathbf z}$ and if $v=-\lambda u$ hold,
then $\frac{\partial f}{\partial x_i}=u\tilde{\mathbf z}_i'$ and $\frac{\partial f}{\partial y_i}=u\lambda\tilde {\mathbf  z}_i'$. Hence, to find $q$ such that $\nabla_f(q)=p$, it is sufficient
to determine a solution of the system of equations
\begin{equation}\label{sistema1}
\left\{
\begin{array}{rl}
u(\mathbf x+\lambda\mathbf y)=&\tilde{\mathbf z}\\
\medskip

\sum_{i=1}^nx_i\frac{\partial g}{\partial z_i}(\tilde{\mathbf z})=&u\tilde{u}'\\
\medskip

-\sum_{i=1}^ny_i\frac{\partial g}{\partial z_i}(\tilde{\mathbf z})=&u\tilde{v}'\\
\end{array}
\right.
\end{equation}

From
$$u\tilde{u}'=\sum_{i=1}^n(\frac{\tilde{z_i}}{u}-\lambda y_i)\frac{\partial g}{\partial z_i}(\tilde{\mathbf z})=\frac{1}{u}d\cdot g(\tilde{\mathbf z})+\lambda u\tilde{v}',$$
we deduce 
$$u^2=\frac{d\cdot g(\tilde{\mathbf z})}{\tilde{u}'-\lambda\tilde{v}'}.$$

If $\tilde u$ is a solution of this last equation and if
$\tilde{\mathbf a}=(\tilde{a}_1,\ldots, \tilde{a}_n)$ is a solution of the last linear equation in \eqref{sistema1},  then
$$p=\nabla_f(\tilde u:-\lambda \tilde u:\frac{\tilde{\mathbf z}}{\tilde u}-\lambda\tilde{\mathbf a}:\tilde{\mathbf a}),$$
yielding equality in \eqref{in1}.

\medskip

By restricting $\nabla_f$ to $X$, we deduce  that 
$$X^*=\overline{\nabla_f(X)}\subseteq S(L', \p^1\times Z_{Y^*}).$$

Let $T=\p^1\times \p^n\subset  (\p^{2n+1})^*$ be the Segre variety defined by the equations
$$
\rk
\left( \begin{array}{cccc}
v' & x'_1 & \ldots& x'_{n'}\\
u'&y'_1&\ldots&y'_n
\end{array}
\right)
=1.$$
Since $\deg(f)=2d$, Euler's Formula gives
\begin{equation}\label{Eulerf}
(2d)f=u\frac{\partial f}{\partial u}+v\frac{\partial f}{\partial v}+\sum_{i=1}^nx_i\frac{\partial f}{\partial x_i}+\sum_{i=1}^ny_i \frac{\partial f}{\partial y_i}=2(u\frac{\partial f}{\partial u}+v\frac{\partial f}{\partial v}).
\end{equation}

Thus,  for every $p\in X$, we have 
\begin{equation}\label{extraeq}
u(p)\frac{\partial f}{\partial u}(p)+v(p)\frac{\partial f}{\partial v}(p)=0, 
\end{equation}
yielding
$$\overline{\nabla_f(X)}=X^*\subseteq T\subset (\p^{2n+1})^*.$$

Indeed, for $i\neq j$ the equations $x'_iy'_j-x_jy'_i=0$ are satisfied by any point $\nabla_f(p)$ due to \eqref{ij-ji}. Due to \eqref{extraeq}, the equations
$v'y'_i-u'x'_i=0$ are satisfied by $\nabla_f(p)$ for every $p\in X$. 
Furthermore, for every $(\mu:\nu) \in\p^1_{\mathbb K}$, the hypersurface $X\cap V(\mu u+\nu v)$ is singular so that  $(\mu:\nu:\mathbf 0:\mathbf 0)\in X^*\cap ((\mu:\nu)\times\p^n)$. By fixing $u,v$, by restricting to  $X\cap V(u\mathbf x-v\mathbf y)$ and by taking into account \eqref{dxi}, \eqref{dyj} and \eqref{extraeq} one deduces  that
$X^*=\p^1\times \widetilde Y\subset T$  (see also the next sections for more details). The other conclusions are now clear.
\end{proof}
\medskip

\begin{rem}{\rm To prove equality in \eqref{in1} one could have argued also in this way. Letting $\rho=\rk(\Hm(g))=\dim(Z_{Y^*})+1$, it is sufficient to prove that $\rk(\Hm(f))=\dim(Z_X)+1$ is equal to 
$\rho+3=\dim(S(L',\p^1\times Z_{Y^*}))+1$.

Clearly  $\Hm(f)$ is a $(2n+2)\times (2n+2)$ matrix,
whose rank can be computed in this way.  The $(2n)\times(2n)$ submatrix corresponding to the second partial derivatives with respect to the variables $x_i$ and $y_j$ has rank $\rho$ by \eqref{dxi} and \eqref{dyj}. The $2\times 2n$ submatrix of $\Hm(f)$ corresponding to the second partial derivatives with respect to the variables $(u,v)\times(x_i, y_j)$ increases the rank by $1$ by \eqref{du} and by \eqref{dv}. The $2\times 2$ submatrix corresponding to the second partial derivatives with respect to the variables $u,v$ increases the rank by $2$. In conclusion $\rk(\Hm(f))=\rho+1+2=\rho+3$ so that equality holds in \eqref{in1} (see also Remark \ref{rhohess}).}
\end{rem}
\medskip

\begin{rem}{\rm Obviously also other similar changes of variables, for example like $\mathbf z\to (u\mathbf x-v\mathbf y)^k$ with $k\geq 2$, will produce other interesting hypersurfaces
with vanishing hessian. Instead of pursuing further these generalizations, we prefer to focus on the geometrical properties of the previous examples
and on  the connections with the so called {\it Dual Cayley Trick}. 
}
\end{rem}

\subsection{Geometrical properties of $X=V(g(u\mathbf x-v\mathbf y))\subset\p^{2n+1}$, of $Z_X^*$ and of the associated polar map}\label{geopro}

Let notation be as above and suppose that $V(g)\subset\p^{n-1}$ has non-vanishing hessian. Then $$Z_X=S(L',W)=S(L',\p^1\times\p^{n-1})\subset(\p^{2n+1})^*$$ and
$$Z_X^*\simeq \p^1\times \p^{n-1}\subset V(u,v)=\langle Z_X^*\rangle\subset\p^{2n+1}$$
is the Segre variety defined in $V(u,v)$  by the equations:
$$
\rk
\left( \begin{array}{ccc}
x_1&\ldots&x_n\\
y_1&\ldots&y_n
\end{array}
\right)
=1.$$
In the terminology of \cite[Section 2.2]{CRS}, the linear space $$\Pi=V(u,v)=\p^{2n-1}\subset \Sing(X)$$ is the {\it core of $X$}.  
If $p=(0:0:\mathbf x:\mathbf y)\in V(u,v)$, then  we can take $(\mathbf x:\mathbf y)$ as coordinates on $V(u,v)$.
Let $L$ be the line of equations $\mathbf x=\mathbf 0=\mathbf y$,  let 
$$\xi =(u:v:\mathbf 0:\mathbf 0)\in L,$$
and let  
$$\Pi_\xi=\langle \Pi, \xi\rangle\subset\p^{2n+1}.$$ 
For a fixed $\xi$, the points of the hyperplane $\Pi_\xi$ can be parametrized by $(tu:tv:\mathbf x:\mathbf y)$ so that $(t:\mathbf x:\mathbf y)$ can
be taken as coordinates on $\Pi_\xi$.
Then $\Pi_\xi\cap X$ has the following equation in the hyperplane $\Pi_\xi$:
$$t^dg(u\mathbf x-v\mathbf y)=0$$
with $u,v$ fixed. Since $t=0$ is the equation of $\Pi\subset \Pi_\xi$, $\Pi_\xi\cap X$ contains $\Pi$ with multiplicity
$d$,
while $$V(g(u\mathbf x-v\mathbf y))\subset \Pi_\xi$$ is a cone with vertex a $\p^n$, which is
not contained in $\Pi$. The change of variable $u\mathbf x-v\mathbf y\mapsto \mathbf x$, $\mathbf y\mapsto \mathbf y$, $t\mapsto t$ shows that  the resulting equation
does not depend on the variables $\mathbf y$ and $t$, yielding that the vertex of the cone is the linear subspace of  $\Pi_\xi$ given by the $n$ linear equations $u\mathbf x-v\mathbf y=\mathbf 0$.

Varying the hyperplane in the pencil of hyperplanes through $V(u,v)$, the vertices  of the corresponding cones describe a Segre variety
$\p^1\times \p^n$, which is the dual of $T\subset(\p^{2n+1})^*$ and which cuts $V(u,v)$ along $Z^*_X$.

In particular, the series of examples constructed in this section is completely different from those known up to now,
which we shall simply call of {\it Gordan-Noether-Perazzo-Permutti-CRS type}. Indeed, in all these examples the intersection of the
linear spaces
$\Pi_\xi=\p^{c+1}$ through  the core  $\Pi=<Z_X^*>=\p^c$ with the hypersurface $X$ consists of the core with a suitable multiplicity and of a cone, whose vertex 
is a linear space contained in $\Pi$.

From the algebraic point of view this new phenomenon means that there does not exist a suitable linear change of coordinates
such that we can  {\it separate the variables in the equation via the core}. We recall that Gordan-Noether-Perazzo-Permutti-CRS type hypersurfaces in  $\p^4$
exhaust the list of hypersurfaces with vanishing hessian that are not  cones.

\begin{example}{\rm In  $\P^7$  with homogeneous coordinates $(u:v:x_1:x_2:x_3:y_1:y_2:y_3)$, let 
$$X = V((x_1u-y_1v)^2+(x_2u-y_2v)^2+(x_3u-y_3v)^2) \subset \P^7$$ 
and let $Y = V(z_1^2+z_2^2+z_3^2) \subset \P^2$ be the  self dual Fermat conic.
Letting the notation be as above, we have $X^* = \P^1 \times \widetilde Y\subset (\p^7)^*$. Let us remark that the construction of irreducible hypersurfaces 
of this kind starts from  $\P^7$. Specialisations of above examples  have interesting applications, see \cite[Example 2.3]{DeBondt2} and \cite{DeBondt1}.}
\end{example}

\begin{example}{\rm In  $\P^5$ with homogeneous coordinates $(u:v:x_1:x_2:y_1:y_2)$, let  
$$X = V((x_1u-y_1v)^2+(x_2u-y_2v)^2+u^4) \subset \P^5.$$
It is not difficult to see that $Z_X=V(\tilde{x}_1\tilde{y}_2-\tilde{x}_2\tilde{y}_1)\subset \P^5$ and that, taking into account the previous remarks, $X\subset\p^5$ is the first 
example of a hypersurface with vanishing hessian that is not a cone and that it is not of Gordan-Noether-Perazzo-Permutti-CRS type.}
 
\end{example}

\section{Hypersurfaces with vanishing hessian constructed from cones with vertex a $\p^{k-1}$ via Dual Cayley Trick}

\subsection{Resultants and discriminants} 
We  recall some well known facts on resultants and discriminants. A reference for most of
the properties listed below is \cite{GKZ}, see also \cite{Cox}.

Let $f_i(x_0,\ldots, x_N)$, $i\in\{0,\ldots,N\}$, be $N+1$ universal homogeneous polynomials
of degree $d_i\geq 1$. Then the {\it resultant of $f_0,\ldots, f_N$}, indicated by
$$\Res (f_0,\ldots, f_N),$$
is a polynomial in the coefficients of the $f_i$'s, which is homogeneous of degree
$d_0\cdots d_{j-1}d_{j+1}\cdots d_N$ in the variables corresponding to $f_j$
and which has degree 
\begin{equation}\label{degRes}
d_0\cdots d_N\sum_{i=0}^N\frac{1}{d_i}.
\end{equation}

The polynomial $\Res(f_0,\ldots,f_N)$  has the following property:
given  homogeneous   polynomials $g_0,\ldots, g_N\in\mathbb K[x_0,\ldots, x_N]$ with $\deg(g_i)=d_i$,  the value of $\Res (f_0,\ldots, f_N)$ on the coefficients 
of $g_0,\ldots, g_N$ is zero if and only if $g_0=\ldots=g_N=0$ has a non-zero solution in $\mathbb K^{N+1}$ (or equivalently
$V(g_0)\cap\ldots\cap V(g_N)\neq\emptyset$ where $V(g_i)\subset\p^N_\mathbb K$ is the projective hypersurface defined by $g_i$), see \cite{GKZ}.

For a universal  $f\in\mathbb K[x_0,\ldots,x_N]_d$, let 
$$\Delta_{N,d}=\Res(\frac{\partial f}{\partial x_0},\ldots, \frac{\partial f}{\partial x_N}),$$
which is a homogeneous polynomial of degree $(N+1)(d-1)^N$ in the ${N+d}\choose d$ coefficients of
the universal $f \in\mathbb K[x_0,\ldots,x_N]_d$. By the previous geometrical} property of the resultant we deduce
that  the (geometrically irreducible) hypersurface 
$V(\Delta_{N,d})\subset\p(\mathbb K[x_0,\ldots,x_N]_d)$, called the  {\it discriminant hypersurface}, is well defined and it describes the locus of  singular projective  hypersurfaces of degree $d$.
\medskip

 \subsection{Dual varieties of rational normal scrolls surfaces and some explicit examples of hypersurfaces with vanishing hessian}\label{dualscrolls}
 
 Let $1\leq a\leq b$ be integers and let $$S(a,b)\subset\p^{a+b+1}$$ be a rational normal scroll of degree $d=a+b$.
 The surface $S(a,b)\subset\p^{a+b+1}$ is  
smooth and projectively generated by a rational normal curve $C_a=\nu_a(\p^1)\subset\p^a$ and a rational normal curve $C_b=\nu_b(\p^1)\subset\p^b$
with $\langle C_a\rangle\cap\langle C_b\rangle=\emptyset$, by taking the union of the lines $\langle \nu_a(p),\nu_b(p)\rangle$, $p\in\p^1$.

We shall choose coordinates $(x_0:\cdots:x_a:y_0:\cdots:y_b)$ on $\p^{a+b+1}$ such that $V(y_0,\ldots,y_b)=\langle C_a\rangle$ and such that
$V(x_0,\ldots, x_a)=\langle C_b\rangle$. Accordingly, $C_a\subset\p^{a+b+1}$ has parametrization $(s^a:s^{a-1}t:\cdots:st^{a-1}:t^a:0:0:\cdots:0)$
and $C_b\subset\p^{a+b+1}$ has parametrization $(0:0:\cdots:0:s^b:s^{b-1}t:\cdots:st^{b-1}:t^b)$. 

The following result is well known, see for example \cite[Example 3.6]{GKZ}, and it shows the existence of a lot of significative examples of hypersurfaces with vanishing hessian, not cones.
Special projections of such examples in $\P^4$ produce examples of Gordan-Noether-Perazzo-Permutti-CRS hypersurfaces  (see \cite{CRS}). 

\begin{theorem}\label{dualSab} Let notation be as above and let $(w_0:\cdots:w_a:z_0:\cdots:z_b)$ be dual coordinates
to $(x_0:\cdots:x_a:y_0:\cdots:y_b).$ Then
$$S(a,b)^*=V(\Res(f,g))\subset (\p^{a+b+1})^*$$ is a hypersurface of degree $a+b$,
where $f=w_0s^a+w_1s^{a-1}t+\cdots +w_at^a\in\mathbb K[s,t]_a$
is a general binary form of degree $a$ and where $g=z_0s^b+z_1s^{b-1}t+\cdots+z_bt^b
\in\mathbb K[s,t]_b$
is a general binary form of degree $b$.

In particular,
$$S(1,b)^*=V((-w_1)^bz_0+(-w_1)^{b-1}w_0z_1+\cdots-w_1w_0^{b-1}+w_0^bz_b)\subset(\p^{b+2})^*$$
is a hypersurface of degree $b+1$, which is not a cone and which for $b\geq 2$ has vanishing hessian.
\end{theorem}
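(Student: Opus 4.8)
The plan is to read off the dual variety from the embedded tangent planes of the scroll, using its ruling by lines. First I would parametrise a dense open subset of $S(a,b)$ by its rulings: writing $\widehat\nu_a(s:t)=(s^a:s^{a-1}t:\cdots:t^a:0:\cdots:0)$ and $\widehat\nu_b(s:t)=(0:\cdots:0:s^b:s^{b-1}t:\cdots:t^b)$ for the (affine) parametrisations of $C_a,C_b\subset\p^{a+b+1}$, every point of $S(a,b)$ has the form $q=\alpha\,\widehat\nu_a(s:t)+\beta\,\widehat\nu_b(s:t)$, and for general $q$ the data $(s:t)$ and $(\alpha:\beta)\in\p^1$ are unique. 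Differentiating this parametrisation of the affine cone in the variables $s,t,\alpha,\beta$ and using the Euler relations $s\,\partial_s\widehat\nu_c+t\,\partial_t\widehat\nu_c=c\,\widehat\nu_c$ for $c=a,b$, one sees that $\partial_tq$ lies in the span of $\widehat\nu_a(s:t)$, $\widehat\nu_b(s:t)$ and $\partial_sq=\alpha\,\partial_s\widehat\nu_a(s:t)+\beta\,\partial_s\widehat\nu_b(s:t)$, so that
\[
T_qS(a,b)=\big\langle\,\widehat\nu_a(s:t),\ \widehat\nu_b(s:t),\ \alpha\,\partial_s\widehat\nu_a(s:t)+\beta\,\partial_s\widehat\nu_b(s:t)\,\big\rangle\cong\p^2 .
\]

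The next step is to express tangency. A hyperplane $H=V\big(\sum_iw_ix_i+\sum_jz_jy_j\big)$ contains $T_qS(a,b)$ exactly when it annihilates the three spanning vectors above, i.e. (putting $f(s,t)=\sum_iw_is^{a-i}t^i$ of degree $a$ and $g(s,t)=\sum_jz_js^{b-j}t^j$ of degree $b$, and writing $f_s,g_s$ for their $s$-derivatives) when $f(s,t)=g(s,t)=0$ and $\alpha f_s(s,t)+\beta g_s(s,t)=0$; the companion equation with $f_t,g_t$ is automatic by Euler. This describes a dense open subset of the conormal variety of $S(a,b)$; projecting it to the dual space, $S(a,b)^*$ is the closure of the locus of $(w:z)$ for which the binary forms $f$ and $g$ have a common root, which by the defining property of the resultant is exactly $V(\Res(f,g))$, so $S(a,b)^*\subseteq V(\Res(f,g))$. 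For the reverse inclusion I would take a general point of $V(\Res(f,g))$: there $f$ and $g$ have a unique, simple common root $(s_0:t_0)$, hence $f_s(s_0,t_0)\neq0\neq g_s(s_0,t_0)$, and $(\alpha:\beta)=(g_s(s_0,t_0):-f_s(s_0,t_0))$ produces a genuine point $q$ of a general ruling of $S(a,b)$ at which $H$ is tangent by the computation above. Since $\Res(f,g)$ is an irreducible polynomial (see \cite{GKZ}) and $S(a,b)^*$ is irreducible, being the image of the irreducible conormal variety, the two coincide; in particular $S(a,b)^*$ is a hypersurface, and its degree equals $\deg\Res(f,g)=a+b$, using that the resultant of a binary form of degree $a$ with one of degree $b$ is homogeneous of degree $b$, resp.\ $a$, in the two sets of coefficients (cf.\ \eqref{degRes}).

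For $S(1,b)$ only the explicit form of the resultant is needed: the unique root of $w_0s+w_1t$ is $(-w_1:w_0)$, so $\Res(w_0s+w_1t,g)=w_0^b\,g(-w_1/w_0,1)=\sum_{j=0}^b(-w_1)^{b-j}w_0^{j}z_j$, which is the stated equation of $S(1,b)^*\subset(\p^{b+2})^*$, a hypersurface of degree $b+1$. That it is not a cone follows because $S(1,b)$, being a rational normal scroll of minimal degree, is non-degenerate in $\p^{b+2}$, and the dual of a non-degenerate variety is never a cone (otherwise, by biduality, the variety would lie in a hyperplane). Finally, writing $F=\sum_{j=0}^b(-w_1)^{b-j}w_0^jz_j$ for the defining polynomial, the partial derivatives $\partial F/\partial z_j=(-w_1)^{b-j}w_0^{j}$, $j=0,\dots,b$, are $b+1$ polynomials in the two variables $w_0,w_1$; for $b\geq2$ there are at least three of them, hence they are algebraically dependent over $\mathbb K$, and so is the full collection of partial derivatives of $F$. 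By the criterion recalled after \eqref{KeyFormula}, this gives $\hess_F=0$, i.e. $S(1,b)^*$ has vanishing hessian for $b\geq2$.

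I expect the only delicate point to be the genericity bookkeeping in the tangent-space computation: one has to check that a general point of $S(a,b)$ really lies on the open ruling chart where the parametrisation by $(s,t,\alpha,\beta)$ is a submersion (so that the displayed $\p^2$ is the honest embedded tangent plane and the three spanning vectors are independent), and, dually, that for a general $(w:z)\in V(\Res(f,g))$ the common root of $f,g$ is simple and the recovered $(\alpha:\beta)$ yields a point lying off $C_a\cup C_b$ in that same chart — all of which is ``clearly generic'' but should be spelled out carefully. Everything else (the resultant identities, the irreducibility and bidegree of $\Res$, and the non-cone and vanishing-hessian statements for $a=1$) is routine.
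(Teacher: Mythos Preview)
Your proposal is correct and follows essentially the same route as the paper: parametrise $S(a,b)$ via its rulings, compute the embedded tangent plane, observe that the third tangency condition is linear in the ruling parameter and can therefore be discarded, and conclude that the dual is cut out by the resultant; the $a=1$ case and the vanishing-hessian argument via algebraic dependence of the $\partial F/\partial z_j$ are also the same. The only (cosmetic) differences are that the paper dehomogenises to $s=1$, $\lambda=1$ and does the tangent-space computation by explicit row operations rather than via the Euler relation, and that you are a bit more explicit about the reverse inclusion and the not-a-cone claim (via non-degeneracy and biduality) than the paper is.
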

\begin{proof}
Let notation be as above. To calculate the parametric equations of 
the tangent plane to $S(a,b)$ at a general point $q=\lambda\nu_a(s,t)+\mu\nu_b(s,t)$ 
with $(s:t)\in\p^1$ we shall suppose
$(s:t)=(1:t)$ and $(\lambda:\mu)=(1:\mu)$. 
In particular we can suppose  $x_0=1$ and $y_0=\mu$. 
Thus 
the projective tangent space $T_qS(a,b)$ is spanned by the rows of the following matrix:
$$
\left( \begin{array}{cccccccc}
1&t&\ldots&t^a&\mu&\mu t&\ldots&\mu t^b\\
0&1&\ldots&at^{a-1}&0&\mu&\ldots&\mu b t^{b-1}\\
0&0&\ldots&0&1&t&\ldots&t^b
\end{array}
\right),
$$
and hence it is 
also spanned by the rows of the matrix
\begin{equation}\label{parTq}
\left( \begin{array}{cccccccc}
1&t&\ldots&t^a&0&0&\ldots&0\\
0&1&\ldots&at^{a-1}&0&\mu&\ldots&\mu b t^{b-1}\\
0&0&\ldots&0&1&t&\ldots&t^b
\end{array}
\right).
\end{equation}
If $(w_0:\cdots:w_a:z_0:\cdots:z_b)$ are dual coordinates on $(\p^{a+b+1})^*$, we get that
a point of $(\p^{a+b+1})^*$ belongs to $S(a,b)^*$ if and only if 
$$
\left\lbrace \begin{array}{lcl}
w_0+w_1t+\cdots + w_at^a&=&0\\
w_1+2w_2 t+\cdots+aw_{a}t^{a-1}+\cdots+\mu z_1+2\mu tz_2+\cdots+b\mu t^{b-1}z_b&=&0\\
z_0+tz_1+\cdots+t^bz_b&=&0 
\end{array}\right.
$$
has a solution $(t,\mu)$. Since the second equation is linear in $\mu$, this happens if
and only if
$$
\left\lbrace \begin{array}{lcl}
w_0s^a+w_1s^{a-1}t+\cdots + w_at^a&=&0\\
z_0s^b+z_1s^{b-1}t+\cdots+z_bt^b&=&0 
\end{array}\right.
$$
has a solution $(s,t)\neq (0,0)$. In conclusion, the equation of $S(a,b)^*$ is the resultant of two general homogeneous forms 
of degree $a$ and degree $b$ in the variables $(s,t)$. Therefore, $S(a,b)^*$ is a hypersurface of degree $d=a+b$ by \eqref{degRes},
whose equation can be explicitly written (for example by using Sylvester Formula).

For $a=1$, the first equation is $sw_0+tw_1=0$, whose roots are $(-w_1,w_0)$. Thus the equation of $S(1,b)^*$ is obtained
by imposing that $(-w_1,w_0)$ is a solution of the second equation, that is
$$S(1,b)^*=V((-w_1)^bz_0+(-w_1)^{b-1}w_0z_1+\cdots+w_0^bz_b)\subset(\p^{b+2})^*.$$

If $b>1$, then  the partial derivatives of the equation of $S(1,b)^*$ with respect to $z_i$ are algebraically dependent so that
the hypersurface $S(1,b)^*\subset(\p^{b+2})^*$ has vanishing hessian and  is not a cone.
\end{proof}
\medskip

The previous analysis admits obvious generalizations we shall only mention without proofs. The surface $S(a,b)\subset\p^{a+b+1}$ can be seen as
the embedding of $\p(\O_{\p^1}(a)\oplus\O_{\p^1}(b))$ into $\p^{a+b+1}=\p( \Hm^0(\O_{\p^1}(a))\oplus\Hm^0(\O_{\p^1}(b)))$
by the tautological line bundle $\O(1)$. Thus, letting $r\geq 1$ and letting 
$$\p^{N(a_0,\ldots,a_r)}=\p(\Hm^0(\O_{\p^r}(a_0))\oplus\ldots \Hm^0(\O_{\p^r}(a_r))),$$
we shall suppose $1\leq a_0\leq\ldots\leq a_r$ and consider 
$$X(a_0,\ldots, a_r)=\p(\O_{\p^r}(a_0)\oplus\ldots \oplus\O_{\p^r}(a_r))\subset\p^{N(a_0,\ldots,a_r)}$$
embedded by the tautological line bundle $\O(1)$. This smooth manifold
is a $\p^r$-bundle over
$\p^r$, which is projectively generated by the $r+1$ varieties  $\nu_{a_i}(\p^r)$ lying in disjoint linear subspaces of 
$\p^{N(a_0,\ldots,a_r)}.$

The same calculations used in the proof of Theorem \ref{dualSab} above 
prove that 
$$X(a_0,\ldots, a_r)^*=V(\Res(f_0,\ldots, f_r)),$$
where $f_i$ is a generic polynomial of degree $a_i$ for $i=0,\ldots, r$. Moreover,  
$$\deg(X(a_0,\ldots, a_r)^*)=a_0\cdots a_r\sum_{i=0}^r\frac{1}{a_i}$$
 by \eqref{degRes}.
 
In particular, if $a_0=\ldots=a_{r-1}=1$ and if $a_r=a\geq 2$, then $$X(1,\ldots, 1, a)^*\subset(\p^{N(1,\ldots,1,a)})^*$$ is a hypersurface 
of degree $r\cdot a+1$ with vanishing hessian which is  not
a cone.
\medskip

\begin{rem}{\rm
These are the first instances  of a general method, which  has been dubbed
{\it the Cayley Trick for mixed resultants} in \cite[Ch. 3, sections 2, 3, 4]{GKZ}, for calculating the explicit equations of dual varieties  of $\p^r$-bundles of the form $\p(\mathcal E)$  embedded by 
the tautological line bundle $\O(1)$ with  $\mathcal E$ a very ample rank $r+1$ locally free sheaf over an irreducible projective variety
$X$ of dimension $r$.}
\end{rem}

We now introduce  and apply the
classical {\it Cayley Trick} and its dual variant, the so called {\it Dual Cayley Trick} to calculate explicitly the equations of some dual varieties.

\subsection{Cayley Trick}\label{notCayley}  Let $X\subset\p^N=\p(V)$ be an irreducible non-degenerate variety of dimension $n\geq 1$.
Let $\mathbb G(r,\p(V))$ denote the Grassmann variety of $r$-dimensional projective subspaces of $\p(V)$. If $L=\p(U)\subset\p(V)$
has dimension $r\geq 0$, then $L^\perp=\p(\Ann(U))\subset\p(V^*)$ has dimension $N-r-1$ and we have a natural isomorphism
$\mathbb G(r,\p(V))\simeq\mathbb G (N-r-1,\p(V^*)),$ defined by sending $[L]$ to $[L^\perp]$. We have two natural rational maps:
$$q:\p(\mathbb K^{r+1}\otimes V)\map \mathbb G(r,\p(V))$$
and
$$p:\p(\mathbb K^{N-r}\otimes V^*)\map \mathbb G(r,\p(V)),$$
corresponding, respectively,  to the parametric equations and  to the
cartesian equations of a subspace $L=\p(U)\subset \p(V)$. The rational maps
are defined on the open sets of elements of maximal rank and on these
open sets they are the quotient maps of the natural action via left multiplication of the group
of invertible matrices. 
\medskip

Let $X\subset\p^{N}=\p(V)$ be as above and let $e=\deg(X)\geq 2$. 
Following \cite{GKZ}, let
$$Z(X)=\{[L]\in\mathbb G(N-n-1,\p(V)):\; L\cap X\neq\emptyset\}\subset\mathbb G(N-n-1,\p(V))$$
be the {\it associated hypersurface of $X$}. Indeed,  $$\codim(Z(X), \mathbb G(N-n-1,\p(V))=1,$$
see \cite[Proposition 2.2, Chap. 3]{GKZ}, and $Z(X)$ is given by a homogeneous element of degree $e=\deg(X)$ in the homogeneous coordinate ring of 
$$\mathbb G(N-n-1,\p(V))\subset
\p(\Lambda^{N-n}V),$$ defined modulo Pl\" ucker relations and dubbed the {\it Chow form of $X$}.

\begin{example}{\rm
Let $f_0,\ldots, f_N\in\mathbb K[x_0,\ldots, x_N]_d$ be homogeneous forms of degree $d\geq 1$.
Then $\Res (f_0,\ldots, f_N)$ is  a homogeneous polynomial of degree $(N+1)d^N$ in 
the $(N+1)\times {{N+d}\choose {d}}$ variables, which are the coefficients of the universal $f_i$'s
or equivalently the homogeneous coordinates  on $\p(\mathbb K[x_0,\ldots, x_N]_d).$

Under this assumption, if $[a_{i,j}]\in GL_{N+1}(\mathbb K)$ and if $$h_i=\sum_{j=0}^N a_{i,j}g_j,$$ then one proves that
\begin{equation}\label{invRes}
\Res(h_0,\ldots,h_N)=\det([a_{i,j}])^{d^N}\Res(g_0,\ldots, g_N).
\end{equation}

In particular, $\Res(f_0,\ldots, f_N)$ is an invariant for  the action by left multiplication of $SL_{N+1}(\mathbb K)$ on the set
of $(N+1)\times {{N+d}\choose {d}}$ matrices with entries in the coefficients of $f_0,\ldots, f_N$. 

By the  {\it First
Theorem of Invariant Theory}  the polynomial $\Res(f_0,\ldots,f_N)$ can be written as a polynomial
of degree $d^N$ in the $(N+1)\times (N+1)$ minors of the $(N+1)\times {{N+d} \choose {d}}$ matrix associated to $\{f_0,\ldots, f_N\}$,
that is in the Pl\" ucker coordinates of the matrix. This polynomial is the (dual) Chow form of $\nu_d(\p(V))\subset\p(S^d(V))=\p^{N(d)}$,
defined modulo Pl\" ucker coordinates, and its  restriction  to $\mathbb G(N(d)-N-1,\p(S^d(V)))$ defines
$Z(\nu_d(\p(V)))$ by the geometrical interpretation of the resultant.

Letting $$p:\p(\mathbb K^{N+1}\otimes (S^dV)^*)\map \mathbb G(N(d)-N-1,\p(S^d(V)))$$ be the natural map
defined above, we deduce
$$V(\Res (f_0,\ldots, f_N))=\overline{p^{-1}(Z(\nu_d(\p(V)))}.$$

Let $y_0,\ldots, y_N$ be other variables and let $$y_0f_0+\cdots+y_Nf_N\in\mathbb K[x_0,\ldots, x_N, y_0,\ldots, y_N]_{d+1},$$
which is also a bihomogeneous polynomial of bidegree $(d,1)$. Then the  classical {\it Cayley Trick} is the formula:
\begin{equation}\label{Cayleytrick}
\mathrm{Res}(f_0,\ldots, f_N)=\Delta(y_0f_0+\cdots+y_Nf_N),
\end{equation}
a useful  remark which dates back to Cayley.

The geometrical translation of the Cayley Trick is the following: if  
$$\p(\mathbb K^{N+1})\times \nu_d(\p(V))\subset \p^{(N+1)(N(d)+1)-1}=\p(\mathbb K^{N+1}\otimes S^d(V))$$
is the Segre embedding of $\p^N\times\nu_d(\p^N)$, 
then
$$(\p^N\times \nu_d(\p(V)))^*=\overline{p^{-1}(Z(\nu_d(\p(V))))}.$$
Indeed, formula \eqref{Cayleytrick} says that if a hyperplane $H\subset\p(\mathbb K^{N+1}\otimes S^d(V))$ contains
$\p^N\times p$, $p\in\nu_d(\p^N)$ (the condition on the left), then there exists $q\in \p^N$ such that $q\times T_p\nu_d(\p^N)\subset H$
so that $H$ is tangent to $\p^N\times \nu_d(\p^N)$ at $(q,p)$ (the condition on the right), yielding  $[H]\in(\p^n\times\nu_d(\p^N))^*$.}
\end{example}
\medskip

Nowadays the geometrical version of the Cayley Trick has been generalized by Gelfand, Weyman and Zelevinsky  to arbitrary irreducible varieties.
\medskip

\begin{theorem}\label{Cayleythm} {\rm (Cayley Trick, \cite[Theorem 2.7, Chap. 3]{GKZ})} Let $X\subset\p^N=\p(V)$ be an irreducible non-degenerate variety of dimension $n\geq 1$ and let
$\p^n\times X\subset\p(\mathbb K^{n+1}\otimes V)$ be the Segre embedding. Then
\begin{equation}\label{CayleyX}
(\p^n\times X)^*=\overline{p^{-1}(Z(X))},
\end{equation}
where $p:\p(\mathbb K^{n+1}\otimes V^*)\map \mathbb G(N-n-1,\p(V))$ is the quotient map corresponding to cartesian
equations of linear subspaces of dimension $N-n-1$ of $\p(V)$.
\end{theorem}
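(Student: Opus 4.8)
The plan is to reduce the statement to the special case of Veronese varieties treated in the preceding Example, by a straightforward functoriality/reduction argument, following \cite{GKZ}. First I would fix the setup: choose a basis of $V$, so that $\p^N = \p(V)$ and the Segre embedding $\p^n\times X\subset\p(\mathbb K^{n+1}\otimes V)$ is given explicitly in coordinates; identify a point of $(\p^n\times X)^*$ with a hyperplane $H$ in $\p(\mathbb K^{n+1}\otimes V)$, equivalently with a bilinear form $B$ on $\mathbb K^{n+1}\times V$, equivalently with an $(n+1)\times(N+1)$ matrix $M$ with entries in $\mathbb K$. The pairing is such that $H$ contains the point $[v_0\otimes w]\mapsto$ corresponds to $B$ vanishing there. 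I would then spell out, using the Segre parametrisation and the Terracini-type description of tangent spaces to a Segre product, exactly what it means for $H$ to be tangent to $\p^n\times X$ at a smooth point $(p,x)$ with $p=[\lambda]\in\p^n$, $x\in X_{\reg}$: namely that the $n+1$ linear forms on $V$ given by the rows of $M$ all vanish at (the affine cone point over) $x$ after pairing with $\lambda$, and that a further condition involving $T_xX$ holds. The upshot — which is the content of the Cayley Trick — is that $[H]\in(\p^n\times X)^*$ if and only if the $n+1$ hyperplanes in $\p(V)$ cut out by the rows of $M$ all contain a common point of $X$, i.e.\ their common intersection $L=\p(\mathrm{Ann}\,(\text{row span}))$, a linear subspace of dimension $\geq N-n-1$, meets $X$. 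Taking $M$ of maximal rank $n+1$ (the generic case on the relevant component), $[L]=p(M)\in\mathbb G(N-n-1,\p(V))$ and the condition $L\cap X\neq\emptyset$ says precisely $[L]\in Z(X)$.

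Concretely I would proceed in the following steps. Step 1: reduce to $X$ linearly normal and in fact to the Veronese by the re-embedding trick, or — cleaner — argue directly. Actually the direct route is preferable: Step 1, write the defining equation. Let $\{g_i\}$ be generators of $I(X)$; a bilinear form $B$ (equivalently matrix $M$) lies in $(\p^n\times X)^*$ iff the hyperplane it defines is a limit of hyperplanes tangent to $\p^n\times X$ at smooth points. Step 2: compute the conormal variety. The smooth locus of $\p^n\times X$ is $\p^n\times X_{\reg}$, and the affine tangent space to $\p^n\times X$ at $(\lambda\otimes w)$ (with $\lambda\in\mathbb K^{n+1}\setminus 0$, $w$ over a smooth point of $X$) is $\mathbb K^{n+1}\otimes w + \lambda\otimes \widehat T_xX$. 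A linear form $B$ annihilates this subspace iff $B(\mathbb K^{n+1}\otimes w)=0$ and $B(\lambda\otimes\widehat T_xX)=0$; the first condition says every row of $M$ annihilates $w$, i.e.\ $x\in\bigcap_i V(\text{row}_i(M))=L$, and, since then automatically $B(\lambda\otimes w)=0$, the second reduces to $B(\lambda\otimes-)$ annihilating $\widehat T_xX$, i.e.\ the hyperplane $\sum_j\lambda_j\,\text{row}_j(M)$ is tangent to $X$ at $x$ — but for $[L]$ a generic point of $Z(X)$ this is automatically satisfied at a general point $x\in L\cap X$, or one simply notes it cuts out no new component. Step 3: conclude that the closure of the image of the conormal variety under the second projection is $\overline{\{M:\ L(M)\cap X\neq\emptyset\}}=\overline{p^{-1}(Z(X))}$, and that its projective dual — which is $(\p^n\times X)^*$ by the biduality theorem — matches, or rather that $(\p^n\times X)^*$ itself is this locus; a dimension count (the left side has codimension $1$ since $Z(X)$ is a hypersurface in the Grassmannian and $p$ is dominant onto the Grassmannian on the maximal-rank locus) shows it is irreducible of the right dimension, hence the two irreducible varieties coincide.

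The main obstacle, and the place where one must be careful rather than hand-wavy, is Step 2 together with the irreducibility/dimension bookkeeping in Step 3: one must verify that the ``extra'' tangency condition $\sum_j\lambda_j\,\text{row}_j(M)$ tangent to $X$ at $x$ does not carve out a lower-dimensional locus that would make $(\p^n\times X)^*$ a proper subvariety of $\overline{p^{-1}(Z(X))}$, and conversely that every component of $\overline{p^{-1}(Z(X))}$ is hit. The clean way to handle this is to fibre the conormal variety $\mathrm{Con}(\p^n\times X)\subset(\p^n\times X)\times(\p^n\times X)^*$ over the conormal variety $\mathrm{Con}(X)\subset X\times(\p^N)^*$: for a fixed smooth $x\in X$ and fixed $[L]\ni x$, the choice of $\lambda\in\p^n$ together with a tangent hyperplane to $X$ at $x$ containing $L$... in fact the map $\mathrm{Con}(\p^n\times X)\to\mathrm{Con}(X)$, $((\lambda,x),(M))\mapsto(x,[\text{the tangent hyperplane}])$ is a fibre bundle with fibres of constant dimension $n + (\text{something independent of }x)$, which forces irreducibility of $\mathrm{Con}(\p^n\times X)$ and pins down its dimension; then projecting to the dual space and using that $Z(X)$ is irreducible of codimension $1$ (Proposition 2.2, Chap. 3 of \cite{GKZ}) gives that $(\p^n\times X)^*$ is irreducible of codimension $1$ in $\p(\mathbb K^{n+1}\otimes V^*)$ and equals $\overline{p^{-1}(Z(X))}$. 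Since this is exactly \cite[Theorem 2.7, Chap. 3]{GKZ}, I would at this point simply cite that reference for the full technical verification, having indicated the mechanism; the statement as quoted is precisely that theorem.
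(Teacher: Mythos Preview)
The paper does not give its own proof of this theorem: it is stated as a citation to \cite[Theorem 2.7, Chap.~3]{GKZ}, and the only argument in the paper is the preceding Example, which explains the geometric mechanism in the special case $X=\nu_d(\p^N)$ (a hyperplane $H$ containing $\p^N\times p$ forces the existence of $q\in\p^N$ with $q\times T_p\nu_d(\p^N)\subset H$). So there is nothing to compare your proposal against beyond that sketch and the cited reference.

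Your outline is essentially the standard argument and matches both the paper's Example and the proof in \cite{GKZ}. One point you flag as ``the main obstacle'' can be disposed of more cleanly than you do. With $M$ of full rank $n+1$, the row span $R\subset V^*$ has dimension $n+1$; if $x\in L\cap X_{\reg}$ then $R\subset x^\perp$, and the space $(\widehat T_xX)^\perp\subset V^*$ of forms vanishing on the affine tangent space has dimension $N-n$ and is also contained in $x^\perp$. Since $\dim x^\perp=N$ and $(n+1)+(N-n)=N+1>N$, the intersection $R\cap(\widehat T_xX)^\perp$ is at least one-dimensional, so some $\lambda\in\p^n$ makes $\sum_j\lambda_j\,\text{row}_j(M)$ vanish on $\widehat T_xX$. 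Hence the ``extra'' tangency condition is \emph{automatically} satisfiable whenever $L\cap X\neq\emptyset$ at a smooth point, and no separate irreducibility/fibre-bundle argument is needed to show that $(\p^n\times X)^*$ is not a proper subvariety of $\overline{p^{-1}(Z(X))}$: you get both inclusions directly from the tangent-space computation. With that simplification your Steps~2--3 collapse into a two-line argument, which is exactly the content of the paper's Example specialised to general $X$.
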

\medskip

\subsection{Dual Cayley Trick}
We now present the  so called {\it Dual Cayley trick},  introduced  by Weyman and Zelevinsky in \cite{WZ}, see also \cite{Kohn}.

Let $\widetilde Y\subset\p^N=\p(V)$ be an irreducible variety of dimension $n\geq 1$ such that $\widetilde Y^*\subset\p(V^*)$ has dimension $N-1-r$. Let $\p^r=\p(T)$
and let 
$$\p^r\times \widetilde Y\subset\p^{(r+1)(N+1)-1}=\p(T\otimes V)$$
be the Segre embedding of $\p(T)\times \widetilde Y$. Then by \cite[Corollary 3.3]{WZ} the dual variety
$(\p^r\times \widetilde Y)^*\subset(\p^{(r+1)(N+1)-1})^*$ is a hypersurface 
$$V(f)\subset (\p^{(r+1)(N+1)-1})^*=\p((T\otimes V)^*)=\p(T^*\otimes V^*),$$
which can be computed in this way.

Let $$q:(\p^{(r+1)(N+1)-1})^*=\p(T^*\otimes V^*)\map \mathbb G(r,\p(V^*))$$
be the natural rational map defined above and corresponding to parametric equations of linear subspaces of dimension $r$ of $\p(V^*)$. This map, in the natural coordinates, sends 
 a rank $r+1$ matrix $\mathbf X\in\mathbb K^{r+1,N+1}$ to its Pl\" ucker coordinates.  It is thus given by forms of degree $r+1$ in the natural coordinates.

Let $$Z(\widetilde Y^*)\subset  \mathbb G(r,\p(V^*))$$
be the  associated hypersurface of $\widetilde Y^*$.
Then \cite[Proposition 4.2.b]{WZ} yields the following  formula:
\begin{equation}\label{WZformula}
(\p(T)\times \widetilde Y)^*=\overline{q^{-1}(Z(\widetilde Y^*))}.
\end{equation}

\subsection{Hypersurfaces with vanishing hessian constructed from cones with vertex a $\p^{r-1}$}
We now apply the Dual Cayley Trick to generalize part (ii) of  Theorem \ref{codimension}, giving also a different and more theoretical
proof of this result.

Let $r\geq 1$ be an integer and let $\widetilde Y\subset\p^N=\p(V)$ be a cone with vertex a $\p^{r-1}=\p(U)$ over a non-degenerate
variety
$Y\subset\p^{N-r}=\p(W)$ without dual defect, that is $Y^*=V(g)\subset(\p^{N-r})^*=\p(W^*)$ is a hypersurface
with $g=g(z_0,\ldots, z_{N-r})\in \mathbb K[z_0,\ldots, z_{N-r}]_d$ for some $d\geq 2$. 
By definition $V=U\oplus W$.

Let $\p^r=\p(T)$ and let 
$$\p(T)\times \widetilde Y\subset\p(T)\times\p(V)\subset\p(T\otimes V)=\p^{(r+1)(N+1)-1},$$
be the Segre embedding of $\p^r\times \widetilde Y$.

We can identify points of $(\p^{(r+1)(N+1)-1})^*=\p((T\otimes V)^*)$ with matrices  
\begin{equation}\label{minors}
\mathbf X=\left[
\begin{tabular}{c|c}
A'&B'
\end{tabular}
\right],
\end{equation}
where $A'\in\mathbb K^{r+1,N-r+1}$ and $B'\in \mathbb K^{r+1,r}$. This decomposition corresponds
to the natural decomposition
$$\Hom(T,V^*) \simeq \Hom(T,W^*)\oplus\Hom(T,U^*)\simeq (T^*\otimes W^*)\oplus (T^*\otimes U^*).$$

The linear system of equations $B'=0_{(r+1)\times r}$ defines
the linear span $\p(\Hom(T, W^*))=\p(T^*\otimes W^*)$ of  the Segre variety $$R=\p(T^*)\times\p(W^*)\subset(\p^{(r+1)(N+1)-1})^*.$$

There are exactly $N-r+1$ minors $B'_j$ of order $r+1$, obtained from $\mathbf X$ as in \eqref{minors} by adding the $j^{\th}$
column of $A'$ to $B'$, $j=0,\ldots, N-r$. Define
$$\phi_1:(\p^{(r+1)(N+1)-1})^*\map \p^{(r+1)r-1}=\p(\Hom(T,U^*)),$$
by 
$$\phi_1(\mathbf X)=B' ,$$
and
$$\phi_2:(\p^{(r+1)(N+1)-1})^*\map (\p^{N-r})^*=\p(W^*),$$
by 
$$\phi_2(\mathbf X)=(B'_0:\cdots:B'_{N-r})\in (\p^{N-r})^*.$$

The point $\phi_2(\mathbf X)$ is the intersection of $\p(W^*)$ with the $r$-dimensional linear subspace of $\p(V^*)$
corresponding to $\mathbf X$.
\medskip

\begin{theorem}\label{codimensionk} Let the hypothesis and the notation be as above,  let $$Z_{Y^*}=\overline{\nabla_g(\p(W^*))}\subseteq\p(W)=\p^{N-r},$$  let
$$\p^r\times Z_{Y^*} \subset\p(T\otimes W)$$ be the Segre embedding, let $X=V(g(B'_0,\ldots,B'_{N-r}))\subset\p(T^*\otimes V^*)$ and let
$f=g(B'_0,\ldots,B'_{N-r}).$ Then:
\medskip
\begin{enumerate}
\item[(i)] $(\p^r\times\widetilde Y)^*=V(g(B'_0,\ldots,B'_{N-r}))\subset\p(T^*\otimes V^*)$;
\medskip
\item[(ii)] $Z_X=\overline{\nabla_f(\p(T^*\otimes V^*))}\subseteq S(\p(T\otimes U),\p(T)\times Z_{Y^*})\subset\p(T\otimes V)$ and
$X$ has vanishing hessian.
\medskip
\end{enumerate}
\end{theorem}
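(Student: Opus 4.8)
The plan is to prove the two parts in order, deriving (ii) from (i) by reusing the algebraic mechanism that made the hessian of $X=V(g(u\mathbf x-v\mathbf y))$ vanish in Theorem \ref{codimension}. For part (i) I would invoke the Dual Cayley Trick in the form \eqref{WZformula}: since $\widetilde Y$ is a cone with vertex $\p^{r-1}=\p(U)$ over $Y\subset\p(W)$ and $Y$ has no dual defect, $\widetilde Y^*\subset\p(V^*)$ lives inside $\p(W^*)\subset\p(V^*)$ (the dual of a cone is contained in the orthogonal of its vertex), and in fact $\widetilde Y^*=Y^*=V(g)$ viewed inside $\p(W^*)$, which has codimension $r$ in $\p(V^*)$, so $\dim\widetilde Y^*=N-1-r$ as required by the set-up of Subsection on the Dual Cayley Trick. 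Then $(\p(T)\times\widetilde Y)^*=\overline{q^{-1}(Z(\widetilde Y^*))}$, where $q$ sends a rank $r+1$ matrix $\mathbf X=[A'\,|\,B']$ to the Pl\"ucker coordinates of the $r$-plane it spans. The associated hypersurface $Z(\widetilde Y^*)\subset\mathbb G(r,\p(V^*))$ records which $r$-planes meet $\widetilde Y^*$; since $\widetilde Y^*\subset\p(W^*)$, an $r$-plane $\Lambda$ meets $\widetilde Y^*$ iff the point $\Lambda\cap\p(W^*)$ (which is generically a single point, computed by the minors $B'_0:\cdots:B'_{N-r}$, as remarked before the theorem, i.e. $\phi_2(\mathbf X)$) lies on $Y^*=V(g)$. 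Hence the defining equation of $Z(\widetilde Y^*)$ pulls back under $q$ to $g(B'_0,\ldots,B'_{N-r})$, giving $(\p^r\times\widetilde Y)^*=V(f)$ with $f=g(B'_0,\ldots,B'_{N-r})$. One must check the degree bookkeeping (each $B'_j$ is a minor of order $r+1$, so $f$ has degree $(r+1)d$, matching the degree of the Chow form composed with $q$) and that the map $q$ is dominant onto the Grassmannian so that the closure of the preimage is exactly $V(f)$ and not a proper subvariety.

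For part (ii), the key observation is that the $N-r+1$ polynomials $B'_0,\ldots,B'_{N-r}$ satisfy quadratic Pl\"ucker-type relations: each $B'_j$ is the maximal minor of the $(r+1)\times(r+1)$ matrix obtained by appending the $j$-th column of $A'$ to the fixed $(r+1)\times r$ block $B'$. Expanding along that appended column, $B'_j=\sum_{i=0}^{r}(-1)^i a'_{ij}M_i$, where $M_i$ is the order-$r$ minor of $B'$ obtained by deleting row $i$ — crucially the $M_i$ do not depend on $j$. Therefore the gradient of $f=g(B'_0,\ldots,B'_{N-r})$ with respect to the entries $a'_{ij}$ of $A'$ has the shape $\partial f/\partial a'_{ij}=(-1)^i M_i\cdot\partial g/\partial z_j(B'_0,\ldots,B'_{N-r})$, exactly the analogue of \eqref{dxi}–\eqref{dyj}. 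This forces, for all $i\neq i'$ and all $j\neq j'$, the vanishing of the $2\times 2$ "cross" expressions $\partial f/\partial a'_{ij}\cdot\partial f/\partial a'_{i'j'}-\partial f/\partial a'_{ij'}\cdot\partial f/\partial a'_{i'j}$ (because each side factors as $\pm M_iM_{i'}\cdot\partial g/\partial z_j\cdot\partial g/\partial z_{j'}$ evaluated at the same point), and analogous relations mixing the $a'_{ij}$-derivatives with the $B'$-entry-derivatives. Since the partial derivatives of $f$ are algebraically dependent (they all factor through the $n=N-r$ quantities $\partial g/\partial z_j(B'_0,\ldots,B'_{N-r})$ together with the order-$r$ minors $M_i$ of $B'$, which is strictly fewer degrees of freedom than $(r+1)(N+1)$), by the criterion in Subsection \ref{ss:polarmap} — $\hess_f=0$ iff the $\partial f/\partial(\cdot)$ are algebraically dependent iff $Z_X\subsetneq\p^{(r+1)(N+1)-1}$ — we conclude $X=V(f)$ has vanishing hessian.

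To pin down $Z_X$ and establish the stated containment $Z_X\subseteq S(\p(T\otimes U),\p(T)\times Z_{Y^*})$, I would argue as in the proof of Theorem \ref{codimension}. The image of $\nabla_f$ has, in its "$W$-block" coordinates (those dual to the entries of $A'$), value $(-1)^iM_i\cdot\partial g/\partial z_j(B'_0,\ldots,B'_{N-r})$ in position $(i,j)$: this is the rank-$\le 1$ matrix $(\text{column }M)\cdot(\text{row }\nabla g)$, hence the $W$-block part of $\nabla_f(\text{point})$ lies on the Segre cone $\p(T)\times Z_{Y^*}$ inside $\p(T\otimes W)$ (using $\nabla g(\p(W^*))\subseteq Z_{Y^*}$ by definition of $Z_{Y^*}$); the "$U$-block" coordinates (dual to the entries of $B'$) are unconstrained and contribute the cone vertex $\p(T\otimes U)$. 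Together these give $Z_X\subseteq S(\p(T\otimes U),\p(T)\times Z_{Y^*})$. I expect the main obstacle to be the careful identification in part (i): namely verifying that $\widetilde Y^*$ really equals $V(g)$ sitting in $\p(W^*)$ with the correct dimension $N-1-r$ (so that the hypotheses of the Dual Cayley Trick apply verbatim), and that $q$ restricted to the locus of maximal-rank matrices is dominant onto $\mathbb G(r,\p(V^*))$ so that the scheme-theoretic preimage computation $\overline{q^{-1}(Z(\widetilde Y^*))}=V(g(B'_0,\ldots,B'_{N-r}))$ is an equality of reduced hypersurfaces rather than a mere inclusion. The hessian computation in part (ii), by contrast, is a direct transcription of the $n=N-r$, "$u,v\rightsquigarrow$ rows of $B'$" generalization of equations \eqref{dxi}–\eqref{ij-ji}, so it should present no essential difficulty beyond the indexing.
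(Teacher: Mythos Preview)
Your proposal is correct and follows essentially the same route as the paper. For part (i) you invoke the Dual Cayley Trick \eqref{WZformula} and identify $Z(\widetilde Y^*)$ via the intersection $\Lambda\cap\p(W^*)$, which is exactly the paper's argument (the paper formalises this by introducing the map $\psi:\mathbb G(r,\p(V^*))\dasharrow\p(W^*)$, $[L]\mapsto[L\cap\p(W^*)]$, and recording $\phi_2=\psi\circ q$); for part (ii) your Laplace expansion $B'_j=\sum_i(-1)^i a'_{ij}M_i$ and the resulting rank--one factorisation of the $A'$-block of $\nabla_f$ are precisely the paper's \eqref{dfij}--\eqref{ik-ki} (with your $M_i$ the paper's $C_i$), yielding both the vanishing hessian and the inclusion \eqref{finclusion}.
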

\begin{proof} One can argue as in the proof of Theorem \ref{codimension} but we prefer
to deduce this more general result from \eqref{WZformula},  specializing the Dual Cayley Trick to our situation.

Recall that in this case  $\widetilde Y^*=Y^*\subset (\p^{N-r})^*=\p(W^*)\subset\p(V^*)$. Moreover,   we can define a rational map 
$$\psi:\mathbb G(r,\p(V^*))\map(\p^{N-r})^*=\p(W^*),$$
by
$$\psi([L])=[L\cap \p(W^*)].$$
The map $\psi$ is not defined along $\mathbb G(r, \p(W^*))\subset\mathbb G(r,\p(V^*))$ and   along
the Schubert cycles given by the  $[L]$'s  such that $\dim(L\cap \p(W^*))>0$.

Since, by hypothesis, $\widetilde Y^*=Y^*=V(g)$ is a hypersurface in $\p(W^*)$, it follows that 
$$\overline{\psi^{-1}(Y^*)}=Z(\widetilde Y^*).$$

Using the coordinates introduced above, we deduce that
\begin{equation}\label{key}
\phi_2=\psi\circ q, 
\end{equation}
where $q:\p(T^*\otimes V^*)\map \mathbb G(r,\p(V^*))$ is the natural rational map considered above.

Then \eqref{WZformula} gives
$$(\p(T)\times \widetilde Y)^*=\overline{q^{-1}(Z(\widetilde Y^*))}.$$
Using \eqref{key} we get

$$\overline{q^{-1}(Z(\widetilde Y^*))}=\overline{q^{-1}(\psi^{-1}(\widetilde Y^*))}=\overline{\phi_2^{-1}(Y^*)}=V(g(B'_0,\ldots,B'_{N-r}))=X.$$

Let notation be as in  \eqref{minors}, let $a'_{i,j}$, with $i=0,\ldots, r$ and with $j=0,\ldots, N-r$, be the homogeneous coordinates corresponding to $A'$
and let $b'_{i,k}$ with $k=1,\ldots, r$ be the coordinates corresponding to $B'$. By definition of $B'_j$, $j=0,\ldots, N-r$, we deduce from  Laplace Formula applied to the first column of $B'_j$:
$$B'_j=\sum_{i=0}^r (-1)^i a_{i,j}C_i$$
yielding
$$\frac{\partial B'_j}{\partial a_{i,j}}=(-1)^i C_i.$$
Moreover, for $m\neq j$, we have
$$\frac{\partial B'_j}{\partial a_{i,m}}=0.$$

From 
\begin{equation}\label{dfij}
\frac{\partial f}{\partial a_{i,j}}=(-1)^iC_i\frac{\partial g}{\partial z_j}(B'_0,\ldots, B'_{N-r})
\end{equation}
we deduce that, for every $i\neq k$ and for every $l\neq m$,
\begin{equation}\label{ik-ki}
\frac{\partial f}{\partial a_{i,l}}\frac{\partial f}{\partial a_{k,m}}-\frac{\partial f}{\partial a_{i,m}}\frac{\partial f}{\partial a_{k,l}}=0.
\end{equation}

Thus $X\subset\p(T^*\otimes V^*)$ has vanishing hessian since the partial derivates of $f$ are algebraically dependent and more precisely
\begin{equation}\label{finclusion}
Z_X=\overline{\nabla_f(\p(T^*\otimes V^*))}\subseteq S(\p(T\otimes U),\p(T)\times Z_{Y^*})\subset\p(T\otimes V).
\end{equation}
\end{proof}

Let us remark that for $r=1$ the base locus of $\psi$ is exactly $\mathbb G(1,(\p(W^*))$ since a line cuts $\p(W^*)$ in one point if and only
if it is not contained in $\p(W^*)$. Thus in this case the expression of $\phi_2$ is, modulo the obvious identifications, that given in \eqref{phi2}.
\medskip

\begin{rem}\label{rhohess}{\rm 
One could ask if equality holds in \eqref{finclusion}. Letting  $\rho=\rk(\Hm (g))=\dim(Z_{Y^*})+1$, it would be sufficient (indeed equivalent) to prove
that $$\rk(\Hm(f))-1=\dim(Z_X)=\dim(S(\p(T\otimes U),\p(T)\times Z_{Y^*}))=r(r+1)+r+\rho-1,$$
i.e. that $\rk(\Hm(f))=r(r+1)+r+\rho.$

Since this analysis is quite delicate (and also intricate) we preferred to skip the details and to concentrate  on  the interesting connections with the Dual Cayley Trick in order to produce the new  examples, which  generalize to arbitrary $r\geq 2$  the case $r=1$ considered in part (ii) of Theorem \ref{codimension}. Last but not least, we point out that  Theorem \ref{codimension} is sufficient to construct examples of hypersurfaces with vanishing hessian with $\codim(X^*,Z_X)$ arbitrary large. Assuming equality in \eqref{finclusion},  one would deduce $\codim(X^*,Z_X)=\codim(Y, Z_{Y^*})+r^2$ and there is no 
advantage in solving the previous equation instead of the simpler $\codim(X^*, Z_X)=\codim(Y, Z_{Y^*})+1$. }
\end{rem}

\subsection{The dual variety of $\p^1\times Y\subset\p^{2n+3}$ with $Y=V(f)\subset\p^{n+1}$ an irreducible hypersurface}

Let $X\subset\p^{N}=\p(V)$ be an irreducible projective variety of dimension $n=\dim(X)$ and degree  $e\geq 2$. Let $\p^{n-i}=\p(T)$,
$i\in\{0,\ldots, n\}$
and let 
$$\p^{n-i}\times X\subset\p^{(n-i+1)(N+1)+1}=\p(T\otimes V)$$
be the Segre embedding of $\p(T)\times X$. 
Let $Z_i(X)\subset \mathbb G(N-n+i-1,\p(V))$ 
be the {\it $i^{\th}$ higher associated variety of $X$} in the sense of \cite{GKZ}, i.e. it is
 the closure of the set
$$\{[L]\in\mathbb G(N-n+i-1,\p(V)): \exists\; x\in X_{\reg}: x\in L, \dim(L\cap T_xX)\geq i\} .$$
Clearly $Z_0(X)=Z(X)$ is the Chow hypersurface of $X$ 
and $$Z_n(X)=X^*\subset \p(V^*)=\mathbb G(N-1,\p(V)).$$
Let us recall that $Z_i(X)$ is a hypersurface in $\mathbb G(N-n+i-1,\p(V))$ if and only if $i\leq n-\codim(X^*)+1$, see \cite{GKZ, WZ} and
\cite{Kohn}.
In particular $Z_{n-1}(X)$ is a hypersurface if and only if  $\codim(X^*)\in\{1,2\}$.

Let $$p:(\p^{(n-i+1)(N+1)-1})^*=\p(T^*\otimes V^*)\map \mathbb G(N-n+i-1,\p(V))$$
be the rational map defined in Subsection \ref{notCayley} by considering the cartesian
equations of a linear subspace of $\p(V)$.
We defined also the rational map 
$$q:(\p^{(n-i+1)(N+1)-1})^*=\p(T^*\otimes V^*)\map \mathbb G(n-i,\p(V^*))$$
associated to the representation of a linear subspace of $\p(V)^*$ via parametric equations.
Then \cite[Proposition 4.2.a]{WZ}, see also \cite{Kohn}, yields the following  formulas:
\begin{equation}\label{formulaiX}
(\p^{n-i}\times X)^*=\overline{p^{-1}(Z_i(X))},
\end{equation}

\begin{equation}\label{formulaiXdual}
(\p^{n-i}\times X)^*=\overline{q^{-1}(Z_{n-\codim(X^*)-i+1}(X^*))}.
\end{equation}
Suppose now that $Y\subset\p^{n+1}=\p(V)$ is a hypersurface such that $Y^*\subset\p(V^*)$
is a hypersurface. Then the previous formulas give
\begin{equation}\label{p1hyp}
(\p^1\times Y)^*=\overline{p^{-1}(Z_{n-1}(Y))}=\overline{q^{-1}(Z_1(Y^*))}.
\end{equation}

Let $Y\subset\p(V)=\p^{n+1}$ be an irreducible hypersurface of degree at least two, then $Z_1(Y)\subset \mathbb G(1,\p(V))$ is a hypersurface
parametrizing the tangent lines to $Y$. It is given by a polynomial
 in the Pl\" ucker coordinates of $\mathbb G(1,\p(V))$. 
 To determine the degree $e$ of this polynomial let us 
 remark that a general line
$L\subset  \mathbb G(1,\p(V))$  consists of the lines $l\subset \p(V)$ passing through a general point
$p\in\p(V)$ and contained in a general plane $\Pi\subset\p(V)$ with $p\in\Pi$. 
Then $e=\#(L\cap Z_1(Y))$ equals the number of tangent lines to $Y\cap \Pi$ passing through $p$,
that is 
$e=\deg((Y\cap \Pi)^*).$
\medskip

\begin{example}\label{Z1Fermat}{\rm Let $Y=V(x_0^2+\cdots+x_{n+1}^2)\subset\p^{n+1}$ be the Fermat quadric hypersurface.
Then  $Z_1(Y)\subset\mathbb G(1,\p^{n+1})$ is a hypersurface of degree 2 whose equation is
quadratic and, modulo Pl\"ucker relations, is the quadratic Fermat in the Pl\"ucker coordinates, that is 
$$Z_1(V(x_0^2+\cdots+x_{n+1}^2))=V(\sum_{0\leq i<j\leq n+1}p_{i,j}^2)\subset \mathbb G(1,\p^{n+1}).$$
Let $f(x_0,\dots,x_{n+1})=x_0^2+x_1^2+\cdots+x_{n+1}^2$, then 
 $Y^*=V(f(y_0,\ldots,y_{n+1}))\subset(\p^{n+1})^*$. Let  $\mathbf a=(a_0:\cdots :a_{n+1})$ and $\mathbf b=(b_0:\cdots:b_{n+1})$
and let $(\mathbf a:\mathbf b)$ the natural coordinates on $\p^{2n+3}$ in such a way that the Pl\" ucker
coordinates $q_{i,j}$ of a matrix whose first row is $\mathbf a$ and whose second row is $\mathbf b$ are
$q_{i,j}=a_ib_j-a_jb_i$ for $0\leq i<j\leq n+1$. Then
$$Z_1(V(y_0^2+\cdots+y_{n+1}^2))=V(\sum_{0\leq i<j\leq n+1}q_{i,j}^2)\subset \mathbb G(1,(\p^{n+1})^*)$$
and
$$(\p^1\times Y)^*=\overline{q^{-1}(Z_1(Y^*))}=V(\sum_{0\leq i<j\leq n+1}(a_ib_j-a_jb_i)^2) \subset(\p^{2n+3})^*.$$
By Lagrange's Identity
$$\sum_{0\leq i<j\leq n+1}(a_ib_j-a_jb_i)^2=||\mathbf a ||^2\cdot ||\mathbf b ||^2-(\mathbf a\bullet \mathbf b)^2.$$
Thus  the dual of $\p^1\times V(x_0^2+\cdots+x_{n+1}^2)\subset \p^{2n+3}$ has a  {\it Cauchy-Schwartz} equation:
$$(\p^1\times V(x_0^2+\cdots+x_{n+1}^2))^*=V(||\mathbf a ||^2\cdot ||\mathbf b ||^2-(\mathbf a\bullet \mathbf b)^2).$$
This is a {\it homaloidal polynomial}, that is the associated polar map is a Cremona transformation of $\p^{2n+3}$.
In particular, the hessian of this quartic polynomial is different from zero and one can verify that it has a unique irreducible factor equal
to the polynomial itself.}
\end{example}
\medskip

\section{Duals of internal projections of Scorza Varieties from a point have vanishing hessian}

The series of varieties $$\p^n\times \p^n\subset \p^{n^2+2n} \text{ (Segre embedded, $n\geq 2$),}$$

$$\nu_2(\p^n)\subset\p^{\frac{n^2+3n}{2}} \text{ (quadratic Veronese embedding, $n\geq 2$),}$$

$$\mathbb G(1,\p^{2m+1})\subset \p^{2m^2+3m}  \text{ (Pl\" ucker embedding, $m\geq 2$)}$$
together with the Severi variety $E^{16}\subset\p^{26}$ are the so called {\it Scorza varieties.} 
These varieties and their duals have a uniform description via linear algebra and via the theory of determinantal varieties
we now briefly recall. 

\subsection{Generic determinantal Scorza varieties}

 Let $$\p^{n^2+2n}=\p(M_{(n+1)\times (n+1)}(\mathbb K)).$$
We shall indicate the generic matrix in $M_{(n+1)\times (n+1)}(\mathbb K)$
by
$$X=[x_{i,j}],$$
$i,j=0,\ldots, n$ and, by abusing notation, we shall also consider $$(x_{0,0}:\cdots:x_{n,n})$$ as homogeneous
coordinates on $\p^{n^2+2n}$. Analogously,  we shall indicate by $Y=[y_{i,j}]$ the matrices in the dual
space $\p((M_{(n+1)\times (n+1)}(\mathbb K))^*)$ in such a way that $(y_{0,0}:\cdots: y_{n,n})$  are homogeneous
coordinates dual to the previous ones.

For every $r=1,\ldots, n+1$ we can define the variety 
$$X_r=\{[X]\;:\;\rk(X)\leq r\}\subset \p(M_{(n+1)\times (n+1)}(\mathbb K));$$
the variety $Y_r\subset \p((M_{(n+1)\times (n+1)}(\mathbb K))^*)$ is
defined in the same way. 
With this notation we have $$X_1=\p^n\times\p^n\subset\p^{n^2+2n}$$
Segre embedded and  $$X_n=V(\det(X))\subset\p^{n^2+2n}$$ is a hypersurface of degree $n+1$. For simplicity,
let 
$$f=\det(X)\in\mathbb K[x_{i,j}]_{n+1}$$
and consider
$$\nabla_f:\p(M_{(n+1)\times (n+1)}(\mathbb K))\map \p((M_{(n+1)\times (n+1)}(\mathbb K))^*).$$

Letting
$$X^\#\in M_{(n+1)\times (n+1)}(\mathbb K)$$ be the matrix
defined by the Laplace formula:
\begin{equation}\label{Lapform}
X\cdot X^\#=\det(X)\cdot I_{(n+1)\times (n+1)}=X^\#\cdot X,
\end{equation}
the identity
\medskip
\begin{equation}\label{adjform}
(X^\#)^\#=\det(X)^{n-1}\cdot X
\end{equation}
shows that   $\nabla_f([X])=[X^\#]^t$ is  birational outside $X_n$.
The map is not  defined along  $X_{n-1}=\Sing(X_n)$ and its  ramification divisor  is given by the determinant of the Jacobian matrix of $\nabla_f$, which is $\hess(f)$. Since $f=\det(X)$ is an irreducible polynomial,
we deduce from \eqref{adjform} that the ramification divisor of $\nabla_f$ is supported on $X_n$, yielding

$$\hess(f)=\alpha\cdot f^{(n+1)(n-1)}$$
with $\alpha\in\mathbb K^*$. This property was  proved in a similar (but not identical) way   by B. Segre in \cite[Teorema 1]{Segre1}. 
By evaluating
the previous identity on particular matrices B. Segre also deduced $\alpha=(-1)^{\frac{n(n-1)}{2}}n$,
see \cite[Teorema 1]{Segre1}.
\medskip

If $[X]\not\in X_n,$ then $[X^\#]\not\in Y_n$ while for $[X]\in X_n\setminus X_{n-1}$, $[X^\#]\in Y_1$ from which it easily
follows that 
$$X_n^*=Y_1=\p^n\times\p^n\subset \p( (M_{(n+1)\times (n+1)}(\mathbb K))^*).$$

By the definition of the Gauss map, for every $[X]\in X_n\setminus X_{n-1}$, we have that $\nabla_f([X])=[T_{[X]}X_n]$ and,
recalling that the closure of the fibers of the Gauss map are linear spaces, that $\overline{\nabla_f^{-1}([T_{[X]}X_n])}$
is linear space of dimension $n^2-1$. 

We are now ready
to prove the next result.

\begin{prop}\label{conoPnPn} Let notation be as above, let $n\geq 2$ and let $[X]\in X_n\setminus X_{n-1}$. Then
$$\overline{\nabla_f(T_{[X]}X_n)}$$ is a hypersurface of degree $n$ which is a cone with  vertex 
$$\overline{\nabla_f^{-1}([T_{[X]}X_n])}^\perp=\p^{2n}$$ over the dual of a Segre variety  $\p^{n-1}\times\p^{n-1}$.
\end{prop}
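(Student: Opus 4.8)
The plan is to reduce the statement to a computation inside the fixed tangent space $T_{[X]}X_n$ and to identify the restriction of $\nabla_f$ there with the gradient map of the determinant of a smaller generic matrix. First I would normalize: since $X_n$ is $\mathrm{SL}_{n+1}\times\mathrm{SL}_{n+1}$-homogeneous away from $X_{n-1}$, I may assume $[X]$ is the point represented by the matrix $E$ whose only nonzero entry is $x_{0,0}=1$, so that $\rk(X)=1$. A direct description of $T_{[X]}X_n$ at such a point, obtained from the fact that $\hess(f)$ vanishes exactly on $X_n$ and from the Laplace formula \eqref{Lapform}, shows that $T_{[X]}X_n$ is the hyperplane $\{x_{n,n}\text{-type coordinate}=0\}$; more precisely, writing a matrix in block form $\left(\begin{smallmatrix} a & b^t\\ c & D\end{smallmatrix}\right)$ with $a\in\mathbb K$, $b,c\in\mathbb K^n$, $D\in M_{n\times n}(\mathbb K)$, the tangent hyperplane is cut out by the linear form that is the $(0,0)$-cofactor of $E$, and I would choose coordinates so $T_{[X]}X_n=V(\text{that cofactor})$. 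On this hyperplane the only surviving ``top'' piece of $\det$ is, up to the obvious linear terms and up to multiplication by the $a$-coordinate, the polynomial $\det(D)$ in the $n^2$ variables of the block $D$.

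Next I would compute $\nabla_f$ restricted to $T_{[X]}X_n$. Using the cofactor expansion, on this hyperplane $f=a\cdot\det(D)+(\text{terms linear in }b,c)$ after a harmless coordinate change, so the partial derivatives of $f$ with respect to the $D$-variables are $a$ times the partials of $\det(D)$, i.e. $a\cdot(\det D)^\#$, while the partials with respect to $a,b,c$ contribute the remaining, lower-dimensional directions. Hence $\overline{\nabla_f(T_{[X]}X_n)}$ is swept out by translating the image of the classical adjugate map $[D]\mapsto[D^\#]^t$ of the $n\times n$ generic determinant; by the $n=2,\dots$ case of the structure already recorded in the excerpt (the dual of $V(\det D)\subset\p^{n^2-1}$ is $\p^{n-1}\times\p^{n-1}$ Segre-embedded, arising as the closure of the adjugate image), this image is exactly $(\p^{n-1}\times\p^{n-1})^*$. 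The extra coordinates $a,b,c$ — which are $2n+1$ of them — enter $\nabla_f$ only linearly and independently of the $D$-block partials after the coordinate change, so $\overline{\nabla_f(T_{[X]}X_n)}$ is the cone over $(\p^{n-1}\times\p^{n-1})^*$ with vertex a linear space $\p^{2n}$. That this vertex is precisely $\overline{\nabla_f^{-1}([T_{[X]}X_n])}^\perp$ follows because the fiber of the Gauss map through $[X]$ is the linear $\p^{n^2-1}$ on which $\nabla_f$ is constant equal to $[T_{[X]}X_n]$, and the directions in $T_{[X]}X_n$ annihilating this fiber are exactly the $a,b,c$-directions, which are the linear directions of the cone. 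The degree count: $\deg(\p^{n-1}\times\p^{n-1})^*=\deg X_n - 1 = n$ by the general formula for dual defect and degree recalled for rational normal scrolls and generic determinantal hypersurfaces earlier, or simply because taking a cone does not change the degree and $(\p^{n-1}\times\p^{n-1})^*$ is the determinant hypersurface of size $n$, of degree $n$.

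The main obstacle I anticipate is the bookkeeping of the coordinate change that simultaneously (i) puts $T_{[X]}X_n$ in the form $V(\ell)$ for a coordinate hyperplane $\ell=0$, (ii) makes the restriction $f|_{T_{[X]}X_n}$ split as $a\cdot\det(D)$ plus genuinely independent linear forms in $b,c$, and (iii) keeps the $2n+1$ linear directions transverse to the $D$-block so that the image really is a cone with a $\p^{2n}$ vertex and not something smaller; one must check that no $b,c$-partial of $f$ accidentally lies in the span of the $D$-partials. This is the part that needs the explicit Laplace/cofactor computation, but it is routine linear algebra once $[X]$ is put in the normal form $\rk(X)=1$. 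Everything else — the identification of the adjugate image with $(\p^{n-1}\times\p^{n-1})^*$, the degree, and the identification of the vertex with the perp of the Gauss fiber — is immediate from the material already developed in the paper.
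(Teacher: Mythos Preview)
Your normalization is wrong, and this breaks the argument at the outset. A point $[X]\in X_n\setminus X_{n-1}$ has rank exactly $n$, not rank $1$. The matrix $E$ with a single nonzero entry $x_{0,0}=1$ lies in $X_1\subset X_{n-1}$, and $\nabla_f$ is not even defined there since every $n\times n$ minor of $E$ vanishes. The rest of the proposal inherits this confusion: the ``$(0,0)$-cofactor of $E$'' is zero, not a nontrivial linear form cutting out a tangent hyperplane, and the block expansion $f=a\cdot\det(D)+(\text{linear in }b,c)$ is not the restriction of $\det$ to any tangent hyperplane of $X_n$ at a smooth point (on $V(x_{n,n})$, for instance, the determinant is still an honest degree $n+1$ polynomial in the remaining $n^2+2n$ variables). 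Even if you repair the normal form to, say, $X=\operatorname{diag}(1,\ldots,1,0)$, the direct computation of $\nabla_f$ restricted to the hyperplane runs straight into the bookkeeping you yourself flag as the main obstacle, and your description of the vertex as ``the $a,b,c$-directions annihilating the fiber'' has the perp on the wrong side of the duality.

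The paper bypasses all of this with one observation you do not use: $\nabla_f$ is a birational involution with inverse $\nabla_h$, where $h=\det(Y)$ on the dual space (this is formula \eqref{adjform}). Hence for any coordinate hyperplane $V(x_{i,j})$ one has
\[
\overline{\nabla_f(V(x_{i,j}))}=\overline{\nabla_h^{-1}(V(x_{i,j}))}=V\Big(\frac{\partial h}{\partial y_{i,j}}\Big).
\]
With the correct rank-$n$ normalization the tangent hyperplane is $V(x_{n,n})$, so $\overline{\nabla_f(T_{[X]}X_n)}=V(\partial h/\partial y_{n,n})$, which is the determinant of the $n\times n$ block $[y_{i,j}]_{0\le i,j\le n-1}$. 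This equation involves none of the $2n+1$ variables $y_{n,i}$, $y_{j,n}$, so the hypersurface is visibly a cone over the generic $n\times n$ determinantal hypersurface, i.e.\ over $(\p^{n-1}\times\p^{n-1})^*$, with vertex the $\p^{2n}$ spanned by those variables. The degree and the identification of the vertex with the perp of the Gauss fiber are then immediate, with no block expansion of $f$ and no transversality check needed.
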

\begin{proof} Since $X_n\setminus X_{n-1}$ is  homogeneous, it is sufficient to verify the assertion
for $X$ with $x_{i,j}=\delta_{i,j}$ for $i,j=0,\ldots, n-1$ and $x_{n,j}=x_{i,n}=0$ for every $i,j$. Then
$$X^\#=(0:0:\cdots:0:1) ,$$ so $T_{[X]}X_n$ has equation $x_{n,n}=0$.
Letting $h=\det(Y)$, \eqref{adjform} implies  $\nabla_f^{-1}=\nabla_h$ as rational maps. Then
$$\overline{\nabla_f(V(x_{n,n}))}=V(\frac{\partial h}{\partial y_{n,n}})$$
is the determinant of the $n\times n$ matrix with entries $y_{i,j}$, $i,j=0,\ldots, n-1$ which does not
depend on the $2n+1$ variables $y_{n,i}$ and $y_{j,n}$. Hence it is a cone with vertex $\overline{\nabla_f^{-1}([T_{[X]}X_n])}^\perp$ over the dual of the 
Segre variety $\p^{n-1}\times\p^{n-1}\subset\p^{n^2-1}$ corresponding to the $n^2$ variables $y_{i,j}$,
$i,j=0,\ldots, n-1$. 
\end{proof}
\medskip

\begin{cor}\label{cor4.2} Let notation be as above, let $n\geq 2$, let $$p\in Y_1=\p^n\times \p^n\subset\p^{n^2+2n}=\p((M_{(n+1)\times (n+1)}(\mathbb K))^*)$$ and let $T_n\subset\p^{n^2+2n-1}$ be the projection
of $Y_1=\p^n\times\p^n$ from $p$. Let $q\in X_n\setminus X_{n-1}$ be such that  $T_qX_n=p^\perp$ and let 
$$R_n=X_n\cap p^\perp\subset p^\perp=\p^{n^2+2n-1} .$$ Then:
\medskip
\begin{enumerate}
\item[(i)] $R_n\subset\p^{n^2+2n-1}$ is an irreducible  hypersurface of degree $n+1$ with vanishing hessian and such that $R_n^*=T_n$.
\medskip
\item[(ii)] $Z_{R_n}\subset\p^{n^2+2n-1}$ is a hypersurface of degree $n$ which is a cone with vertex a $\p^{2n-1}$ over  the dual of a Segre variety $\p^{n-1}\times\p^{n-1}$.
\medskip
\item[(iii)] $\codim(R_n^*, Z_{R_n})=n^2-2.$
\end{enumerate}
\end{cor}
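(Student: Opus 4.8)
The plan is to reduce at once to a normal form and then read everything off Proposition~\ref{conoPnPn} and Lemma~\ref{projectionHess}. Since $X_n\setminus X_{n-1}$ is homogeneous under the $GL_{n+1}(\mathbb K)\times GL_{n+1}(\mathbb K)$-action $X\mapsto AXB$, and $X_n$, its Gauss map, the polar map $\nabla_f$, the variety $Y_1$ and all the duals involved are equivariant for it, I would take for $q$ the distinguished point used in the proof of Proposition~\ref{conoPnPn}: $q=[X]$ with $x_{i,j}=\delta_{i,j}$ for $i,j\le n-1$ and last row and column zero. Then $\nabla_f(q)=[E_{n,n}]$, so $p=[E_{n,n}]$, $p^\perp=T_qX_n=V(x_{n,n})$, and $R_n=V(\det X,\,x_{n,n})=V\!\big(\det X|_{x_{n,n}=0}\big)$ inside the $\mathbb P^{n^2+2n-1}$ with coordinates the remaining $x_{i,j}$.

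For part (ii) and the vanishing of the hessian I would apply Lemma~\ref{projectionHess} with $X=X_n$, $H=p^\perp$ and $h=p$: the commutative diagram there identifies $\nabla_{R_n}$ on $p^\perp$ with $\pi_p\circ(\nabla_f)|_{p^\perp}$, so $Z_{R_n}=\pi_p\big(\overline{\nabla_f(p^\perp)}\big)$. Since $p^\perp=T_qX_n$, Proposition~\ref{conoPnPn} says that $\overline{\nabla_f(p^\perp)}$ is a hypersurface of degree $n$, namely a cone with vertex $F^\perp=\mathbb P^{2n}$ over the dual $B$ of a Segre $\mathbb P^{n-1}\times\mathbb P^{n-1}$ spanning a $\mathbb P^{n^2-1}$ complementary to $F^\perp$, where $F=\overline{\nabla_f^{-1}(p)}\cong\mathbb P^{n^2-1}$. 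The key point is that $F$, being a fibre of the Gauss map of $X_n$, lies in the tangent hyperplane $T_qX_n=p^\perp$; dually this says $p\in F^\perp$, i.e. the centre of the projection lies on the vertex of the cone. Projecting a cone from a point of its vertex yields a cone of the same degree with vertex $\pi_p(F^\perp)=\mathbb P^{2n-1}$ over the isomorphic image of $B$, which is precisely statement (ii); since $Z_{R_n}$ is thereby a proper subvariety of $p^\perp\cong\mathbb P^{n^2+2n-1}$, the Key Formula~\eqref{KeyFormula} forces $\hess_{R_n}=0$.

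For the remaining assertions of (i), the degree of $R_n$ is $n+1$, being the hyperplane section of $X_n=V(\det X)$ by $p^\perp\not\subseteq X_n$. For irreducibility I would note that $R_n$ is connected, being a hypersurface in $\mathbb P^{n^2+2n-1}$, and reduced, being the complete intersection $V(\det X,x_{n,n})$ with generically smooth points; from the normal form, $\partial\big(\det X|_{x_{n,n}=0}\big)/\partial x_{i,j}=C_{i,j}(X)|_{x_{n,n}=0}$ is the corresponding cofactor, so a point of $R_n$ is singular exactly when all cofactors $C_{i,j}$ with $(i,j)\neq(n,n)$ vanish there, which gives $\Sing(R_n)=(X_{n-1}\cap p^\perp)\cup F$, of codimension $\ge 3$ in $R_n$; if $\det X|_{x_{n,n}=0}$ were reducible, two of its irreducible factors would cut out components of $R_n$ meeting along a codimension-one locus contained in $\Sing(R_n)$, which is impossible. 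For $R_n^*=T_n$ I would restrict the diagram of Lemma~\ref{projectionHess} to $R_n$, obtaining $\mathcal G_{R_n}=\pi_p\circ\mathcal G_{X_n}|_{R_n}$; since the Gauss fibres $L_y\cong\mathbb P^{n^2-1}$ of $X_n$ cannot all lie in the hyperplane $p^\perp$ (otherwise $X_n\subseteq p^\perp$), a general $L_y$ meets $R_n\setminus X_{n-1}$, hence $\overline{\mathcal G_{X_n}(R_n\setminus X_{n-1})}=X_n^*=Y_1=\mathbb P^n\times\mathbb P^n$ and $R_n^*=\overline{\pi_p(Y_1)}=T_n$. As $Y_1$ is not a cone with vertex $p$, $\dim T_n=2n$ and $T_n$ is non-degenerate in $\mathbb P^{n^2+2n-1}$, so $R_n$ is not a cone; finally (iii) follows at once, $\codim(R_n^*,Z_{R_n})=\dim Z_{R_n}-\dim R_n^*=(n^2+2n-2)-2n=n^2-2$.

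The dimension bookkeeping for the cone in (ii) and the count in (iii) are routine. The delicate step is the irreducibility of $R_n$: it hinges on identifying $\Sing(R_n)$ precisely — in particular the extra component $F$ produced by the fact that $p^\perp$ is a tangent rather than a general hyperplane — and, in the same vein, on checking that the Gauss image of $R_n$ still fills $X_n^*$; both genuinely exploit the special geometry of the determinantal hypersurface and cannot be obtained from generic-position arguments.
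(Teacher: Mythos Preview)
Your argument is correct and follows essentially the paper's approach: the same normal form, the same use of Proposition~\ref{conoPnPn} and Lemma~\ref{projectionHess}, and the same key observation that $p\in F^\perp=\operatorname{Vert}\big(\overline{\nabla_f(T_qX_n)}\big)$ (dually, $F\subset p^\perp$), so that projecting the cone from $p$ simply lowers the vertex by one. The only deviations are in part~(i) and are cosmetic: the paper obtains irreducibility via Serre's criterion (connected plus normal, from nonsingularity in codimension one) rather than your component-intersection argument, and it proves $R_n^*=T_n$ in the reverse direction, showing $T_n^*\subseteq R_n$ via the general fact that the dual of an internal projection is the tangent hyperplane section of the dual (citing \cite[Exercise~1.5.22]{Russo}) and then invoking irreducibility, instead of your direct Gauss-map computation.
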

\begin{proof} The hypersurface $R_n\subset\p^{n^2+2n-1}$ is connected,  it is also normal by Serre's Criterion being non-singular in codimension 1
($\Sing(R)$ is the closure of the  contact locus $\p^{n^2-1}_q$ of $T_q X_n$
 defined above) and hence it is irreducible. The variety $T_n$ has no dual defect so that $T_n^*$ is an irreducible
hypersurface contained in $R_n$ (see for example \cite[Exercise 1.5.22]{Russo}), yielding $R_n=T_n^*$.   Since $\overline{\nabla_f(T_qX_n)}$ is a cone
such that 
$p\in \vert( \overline{\nabla_f(T_qX_n)})$, we deduce that $Z_{R_n}$ is the projection of $\overline{\nabla_f(T_qX_n)}$ from $p$. Thus (ii) follows
from Proposition \ref{conoPnPn} and Lemma \ref{projectionHess}.  
\end{proof}
\medskip

\begin{rem}\label{CuRS}{\rm The previous result has been discovered  for $n=2$ in \cite{GoRu}, see also \cite[Example 7.6.11]{Russo}.
Part (ii) has been proved algebraically also in \cite[Proposition 4.9]{MoSi}. By passing to a suitable linear section of $X_n$ obtained 
by putting some more variable equal to zero in the matrix $X$ such that $f=\det(X)$, Cunha, Ramos and Simis produced explicit irreducible polynomials with vanishing hessian
and such that  $\codim(X^*,Z_X)$ is a function of $n$. These examples can be also described geometrically as the duals  of some
explicit projections of $Y_1$. 
Clearly the examples in \cite{CuRaSi} are  of Gordan-Noether-Perazzo-Permutti-CRS type since one can  {\it separate the variables}
via Laplace formula for the expansion of the determinant.}
\end{rem}

\subsection{Symmetric determinantal Scorza varieties}
Let
$$W_n=\{S\in M_{(n+1)\times (n+1)}(\mathbb K)\;:\; S=S^t\}\subset M_{(n+1)\times (n+1)}(\mathbb K).$$
 \medskip

Although for $n\geq 1$ the subspace $W_n$ is not a subalgebra of  $M_{(n+1)\times (n+1)}(\mathbb K)$,  we have $S^\#\in W_n$ for every $S\in W_n$.
Let $S=[s_{i,j}]$ be the generic matrix in $W_n$ and let $(s_{0,0}:\cdots:s_{n,n})$ be the corresponding homogeneous coordinates on $$\p(W_n)=
\p^{\frac{n^2+3n}{2}}.$$
Let 
$$g=\det(S)\in\mathbb K[s_{i,j}]_{n+1}.$$
The operation $\#$  on $M_{(n+1)\times (n+1)}(\mathbb K)$ induces by restriction to $W_n$ a birational involution 
$$\nabla_g:\p(W_n)\map\p(W_n^*),$$
defined by $\nabla_g([S])=[S^\#]^t.$

For every $r=1,\ldots, n+1$ we can define the variety 
$$S_r=\{[S]\in\p(W_n)\;:\;\rk(S)\leq r\}\subset \p(W_n);$$
the variety $U_r\subset \p(W_n^*)$ is
defined in the same way. 
With this notation we have $$S_1=\nu_2(\p^n)\subset\p^{\frac{n^2+3n}{2}}=\p(W_n)$$
Veronese  embedded and  that $$S_n=V(\det(S))\subset\p^{\frac{n^2+3n}{2}}$$ is a hypersurface of degree $n+1$. 

The rational map $\nabla_g$ is not  defined along  $S_{n-1}=\Sing(S_n)$ and its  ramification divisor  is given by the determinant of the Jacobian matrix of $\nabla_g$, which is $\hess(g)$.  Since $g=\det(S)$ is an irreducible polynomial,
we deduce from \eqref{adjform} that the ramification divisor of $\nabla_g$ is supported on $S_n$, yielding
$$\hess(g)=\beta\cdot g^{\frac{(n+2)(n-1)}{2}}$$
with $\beta\in\mathbb K^*$. This property was  proved in a similar (but not identical) way   by B. Segre in \cite[Theorem 2]{Segre1}.
By evaluating
the previous identity on particular matrices B. Segre  deduced $\beta=(-1)^{\frac{n(n-1)}{2}}\cdot 2^{\frac{(n+1)n}{2}}\cdot n$,
see \cite[Teorema 2]{Segre1}.
\medskip

If $[S]\not\in S_n,$ then $[S^\#]\not\in U_n$ while for $[S]\in S_n\setminus S_{n-1}$, $[S^\#]\in U_1$. From this it
follows that
$$S_n^*=U_1=\nu_2(\p^n)\subset \p^{\frac{n^2+3n}{2}}=\p(W_n^*).$$
By the definition of the Gauss map, for every $[S]\in S_n\setminus S_{n-1}$, we have $\nabla_g([S])=[T_{[S]}S_n]$ and
$\overline{\nabla_g^{-1}([T_{[S]}S_n])}$
is an $((n^2+n-2)/{2})$-dimensional projective space. 

 We are now ready
to prove the next result and its Corollary, whose proofs will be omitted being analogous to those presented in  Proposition \ref{conoPnPn}
and in Corollary \ref{cor4.2}.

\begin{prop}\label{conov2Pn} Let notation be as above, let $n\geq 2$ and let $[S]\in S_n\setminus S_{n-1}$. Then
$$\overline{\nabla_g(T_{[S]}S_n)}$$ is a hypersuface of degree $n$ which is a cone with  vertex 
$$\overline{\nabla_f^{-1}([T_{[X]}X_n])}^\perp=\p^{n}$$ over the dual of a Veronese variety  $\nu_2(\p^{n-1})$.
\end{prop}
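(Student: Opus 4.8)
The plan is to mimic exactly the argument of Proposition~\ref{conoPnPn}, replacing the space of all matrices by the subspace $W_n$ of symmetric matrices, the Segre variety $\p^n\times\p^n$ by the Veronese $\nu_2(\p^n)$, and the determinant $f=\det(X)$ by $g=\det(S)$. Since $S_n\setminus S_{n-1}$ is homogeneous under the congruence action $S\mapsto A^tSA$, $A\in GL_{n+1}(\mathbb K)$, it suffices to verify the claim for one conveniently chosen point $[S]\in S_n\setminus S_{n-1}$, and the natural choice is the diagonal symmetric matrix with $s_{i,j}=\delta_{i,j}$ for $i,j=0,\dots,n-1$ and all entries in the last row and column equal to zero. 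For this $S$ one computes $S^\#=(0:\cdots:0:1)$ (the matrix unit $E_{n,n}$, which is again symmetric), so that $T_{[S]}S_n$ is the hyperplane of equation $s_{n,n}=0$.

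Next, as in the proof of Proposition~\ref{conoPnPn}, I would use that $\nabla_g$ restricted to $\p(W_n)$ is the birational involution $[S]\mapsto[S^\#]$, so that, letting $h=\det(T)$ be the symmetric determinant in the dual coordinates $t_{i,j}$, the inverse of $\nabla_g$ is $\nabla_h$ (as rational maps on $\p(W_n^*)$). Hence $\overline{\nabla_g(V(s_{n,n}))}=V(\partial h/\partial t_{n,n})$. Expanding the symmetric determinant $h=\det(T)$ by Laplace along the last row, the only term containing $t_{n,n}$ has coefficient the $n\times n$ symmetric minor $\det([t_{i,j}]_{0\le i,j\le n-1})$, so $\partial h/\partial t_{n,n}$ equals this $n\times n$ symmetric determinant, which involves only the $\binom{n+1}{2}$ variables $t_{i,j}$ with $i,j\le n-1$ and none of the $2n+1$ variables $t_{n,i}$, $t_{i,n}$ ($i\leq n$). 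Therefore $\overline{\nabla_g(T_{[S]}S_n)}$ is a cone of degree $n$ whose vertex is the linear span of the $2n+1$ missing coordinate points, i.e.\ a $\p^{n}$, over the hypersurface $V(\det([t_{i,j}]_{0\le i,j\le n-1}))$ inside $\p(W_{n-1}^*)=\p^{(n^2+n-2)/2}$. This last hypersurface is $U_{n-1}$ in the lower-dimensional symmetric setup, which by the identifications already established in the subsection is precisely the dual of the Veronese $\nu_2(\p^{n-1})\subset\p^{(n^2+n-2)/2}$. Finally I would check that the vertex $\p^{n}$ is indeed $\overline{\nabla_g^{-1}([T_{[S]}S_n])}^\perp$: the fibre of the Gauss map through $[S]$ is the linear space of rank $\le n$ symmetric matrices whose image lies in the fixed hyperplane complementary to the last coordinate, a $\p^{(n^2+n-2)/2}$, and its annihilator is exactly the $\p^{n}$ spanned by those coordinate points, as needed.

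The only genuinely delicate point, as hinted in the excerpt's phrase ``whose proofs will be omitted being analogous'', is the bookkeeping of the symmetric-matrix dimensions: in the symmetric case the last row and column of $S$ share the entry $s_{n,n}$, so the linear forms that disappear from $\partial h/\partial t_{n,n}$ are the $n$ entries $t_{i,n}=t_{n,i}$ with $i<n$ together with $t_{n,n}$ itself, giving a vertex of dimension $n$ (not $2n$ as in the general case), and the residual Veronese lives in a projective space of dimension $\binom{n+1}{2}-1=(n^2+n-2)/2$; matching these numbers with the statement is the main thing to get right, but it is a routine count once the diagonal normal form is fixed. Everything else is a verbatim transcription of the argument for Proposition~\ref{conoPnPn}, using \eqref{Lapform}, \eqref{adjform} and the fact that $S^\#\in W_n$ for $S\in W_n$.
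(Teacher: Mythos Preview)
Your proposal is correct and follows precisely the route the paper itself indicates: the authors omit the proof, stating it is ``analogous to those presented in Proposition~\ref{conoPnPn}'', and your argument is exactly that analogy carried out---choose the diagonal rank-$n$ symmetric matrix by homogeneity of $S_n\setminus S_{n-1}$ under congruence, compute $S^\#=E_{n,n}$ so that $T_{[S]}S_n=V(s_{n,n})$, and use the inverse map $\nabla_h$ to identify the image with $V(\partial h/\partial t_{n,n})$, the $n\times n$ symmetric minor. Your dimension count for the vertex ($n+1$ missing coordinates $t_{0,n},\dots,t_{n,n}$ giving a $\p^n$) and the identification of the base with the dual of $\nu_2(\p^{n-1})$ are both correct; the only cosmetic point is that the displayed formula in the statement has a copy-paste slip ($\nabla_f^{-1}([T_{[X]}X_n])$ should read $\nabla_g^{-1}([T_{[S]}S_n])$), which you silently and correctly interpret.
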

\medskip

\begin{cor}\label{cor4.5} Let notation be as above, let $n\geq 2$, let $$p\in S_1=\nu_2(\p^n)\subset\p^{\frac{n^2+3n}{2}}=\p(W_n^*)$$ and let $V_n\subset\p^{\frac{n^2+3n-2}{2}}$ be the projection
of $S_1=\nu_2(\p^n)$ from $p$. Let $q\in S_n\setminus S_{n-1}$ be such that  $T_qS_n=p^\perp$ and let 
$$Q_n=S_n\cap p^\perp\subset p^\perp=\p^{\frac{n^2+3n-2}{2}} .$$ Then:
\medskip
\begin{enumerate}
\item[(i)] $Q_n\subset\p^{\frac{n^2+3n-2}{2}}$ is an irreducible  hypersurface of degree $n+1$ with vanishing hessian and such that $Q_n^*=V_n$.
\medskip
\item[(ii)] $Z_{Q_n}\subset\p^{\frac{n^2+3n-2}{2}}$ is a hypersurface of degree $n$ which is a cone with vertex a $\p^{n-1}$ over  the dual of a Veronese variety $\nu_2(\p^{n-1})$.
\medskip
\item[(iii)] $\codim(Q_n^*, Z_{Q_n})=\frac{n^2+n-4}{2}.$
\end{enumerate}
\end{cor}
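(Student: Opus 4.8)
The plan is to run in parallel the proofs of Proposition \ref{conov2Pn} and of Corollary \ref{cor4.2}, through the dictionary that replaces the generic determinantal data there by the symmetric ones: $X_n=V(\det(X))$ becomes $S_n=V(\det(S))\subset\p(W_n)=\p^{\frac{n^2+3n}{2}}$, the Segre variety $Y_1=X_n^*$ becomes the Veronese $U_1=\nu_2(\p^n)=S_n^*$, the internal projection $T_n=\pi_p(Y_1)$ becomes $V_n=\pi_p(U_1)$, and the $(n^2-1)$-dimensional Gauss fibre of $X_n$ becomes the Gauss fibre $\Lambda_q=\overline{\nabla_g^{-1}([T_qS_n])}$ of $S_n$, which by the discussion preceding Proposition \ref{conov2Pn} is a linear $\p^{\frac{n^2+n-2}{2}}$ and which, being the Gauss fibre of a hypersurface, is contained in the tangent hyperplane $T_qS_n=p^\perp$.

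For part (i), the first task is to prove that $Q_n=S_n\cap p^\perp$ is irreducible. It is a hyperplane section of the irreducible, non-degenerate hypersurface $S_n$ not containing $p^\perp$, hence it is connected, of degree $n+1$ and of dimension $\frac{n^2+3n-4}{2}$; its singular locus is $(S_{n-1}\cap p^\perp)\cup\Lambda_q$, so for $n\geq 3$ it is regular in codimension one (the codimension of $\Lambda_q$ in $Q_n$ being $n-1\geq 2$, and that of $S_{n-1}\cap p^\perp$ being even larger), hence normal by Serre's criterion and therefore irreducible; for $n=2$ one observes instead that $Q_2=S(1,2)^*\subset\p^4$ by Theorem \ref{dualSab}, which is irreducible. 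Then, just as for $T_n$ in the proof of Corollary \ref{cor4.2}, $V_n=\pi_p(S_n^*)$ is the internal projection of $S_n^*=U_1$ from the point $p\in U_1$, so $V_n$ has no dual defect and $V_n^*$ is an irreducible hypersurface contained in $(S_n^*)^*\cap p^\perp=S_n\cap p^\perp=Q_n$ (see \cite[Exercise 1.5.22]{Russo} and biduality for $S_n$); comparing dimensions and invoking the irreducibility of $Q_n$ gives $V_n^*=Q_n$, hence $Q_n^*=V_n$. Finally $Q_n$ has vanishing hessian because, by \eqref{KeyFormula} and the paragraph following it, this is equivalent to $Z_{Q_n}\subsetneq\p^{\frac{n^2+3n-2}{2}}$, which follows from part (ii).

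For parts (ii) and (iii), I would apply Lemma \ref{projectionHess} with $X=S_n$ and $H=p^\perp$: the polar map of $Q_n$ factors as $\pi_p\circ(\nabla_g|_{p^\perp})$, so $Z_{Q_n}=\overline{\pi_p(\nabla_g(T_qS_n))}=\pi_p\bigl(\overline{\nabla_g(T_qS_n)}\bigr)$. By Proposition \ref{conov2Pn}, $\overline{\nabla_g(T_qS_n)}$ is a degree-$n$ hypersurface in $\p(W_n^*)$ which is a cone with vertex $\Lambda_q^\perp=\p^n$ over the dual of a Veronese $\nu_2(\p^{n-1})$; since $\Lambda_q\subseteq p^\perp$ we have $p\in\Lambda_q^\perp$, i.e.\ $p$ lies on the vertex, and projecting a cone from a point of its vertex leaves the base unchanged, lowers the vertex by one, and preserves the degree. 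Hence $Z_{Q_n}$ is a degree-$n$ cone with vertex a $\p^{n-1}$ over $\nu_2(\p^{n-1})^*$, in particular a hypersurface in $\p^{\frac{n^2+3n-2}{2}}$; this proves part (ii), and part (iii) is then the dimension count $\codim(Q_n^*,Z_{Q_n})=\dim Z_{Q_n}-\dim V_n=\frac{n^2+3n-4}{2}-n=\frac{n^2+n-4}{2}$.

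The main obstacle — the only place where the argument is not a mechanical transcription of the proof of Corollary \ref{cor4.2} — is the irreducibility of $Q_n$: in the symmetric case the contact locus $\Lambda_q$ drops to codimension only $n-1$ in $Q_n$ instead of growing quadratically, so Serre's criterion applies only for $n\geq 3$ and the case $n=2$ has to be isolated and settled separately, through Theorem \ref{dualSab}. Once irreducibility is secured, the identification $Q_n^*=V_n$, the structure of $Z_{Q_n}$, and the codimension formula all follow formally from Proposition \ref{conov2Pn} and Lemma \ref{projectionHess}.
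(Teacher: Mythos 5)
Your proof is correct and is precisely the argument the paper intends, since it declares the proof of Corollary \ref{cor4.5} to be omitted as analogous to those of Proposition \ref{conoPnPn} and Corollary \ref{cor4.2}. You have in fact been more careful than the paper at the one point where the analogy is not automatic: for $n=2$ the contact locus $\Lambda_q$ is a divisor in $Q_2$, so the normality argument via Serre's criterion breaks down, and your separate treatment of that case (the containment $V_2^*\subseteq Q_2$ of two cubics in $\p^4$, forcing equality and irreducibility) correctly closes the gap.
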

\medskip

\begin{rem}{\rm The varieties $V_n\subset\p^{\frac{n^2+3n-2}{2}}$ are smooth being isomorphic to $\Bl_p\p^n$ and their duals are hypersurfaces
with vanishing hessian. The classification of hypersurfaces with vanishing hessian whose dual is smooth
seems to be an intriguing question, also due to the lack of known examples.

The first element of the series, $V_2\subset \p^4$ is nothing but $S(1,2)$. The cubic hypersurface $S(1,2)^*\subset\p^4$, whose equation we
computed explicitly in Theorem \ref{dualSab}, is surely the easiest and simplest counterexample to Hesse's Claim. As far as we know, the fact that this example was the first member of an infinite series of hypersurfaces with vanishing hessian has been noticed
by us for the first time several years ago. The very recent paper \cite{CuRaSi2} deals with similar phenomena treated from a purely algebraic point of view.}
\end{rem}

\subsection{Skew-symmetric determinantal Scorza varieties}

Let
$$M_n=\{A\in M_{(n+1)\times (n+1)}(\mathbb K)\;:\; A=-A^t\}\subset M_{(n+1)\times (n+1)}(\mathbb K).$$
\medskip

For $n\geq 1$ the subspace $M_n$ is not a subalgebra of  $M_{(n+1)\times (n+1)}(\mathbb K)$ but  $A^\#\in M$ for every $A\in M$.
Let $A=[a_{i,j}]$ be the generic matrix in $M_n$ and let $(a_{0,1}:\cdots:a_{n-1,n})$ be the corresponding homogeneous coordinates on $$\p(M_n)=
\p^{\frac{n^2+n-2}{2}}.$$
From now on suppose that $n+1=2m+2$ with $m\geq 2$ so that $\frac{n^2+n-2}{2}=2m^2+3m$ and $n=2m+1$.
Then 
$$\det(A)=\Pf^2\in\mathbb K[a_{i,j}]_{2m+2},$$ with $\Pf\in \mathbb K[s_{i,j}]_{m+1}$.
The  operation $\#$  on $M_{(n+1)\times (n+1)}(\mathbb K)$ induces by restriction to $M_{2m+1}$ a birational involution 
$$\nabla_{\Pf}:\p(M_{2m+1})\map\p(M_{2m+1}^*),$$
defined by $\nabla_{\Pf}([A])=[A^\#]^t.$

For every $r=1,\ldots, m+1$ we can define the variety 
$$A_{2r}=\{[A]\in\p(M_{2m+1})\;:\;\rk(A)\leq 2r\}\subset \p(M_{2m+1});$$
the variety $C_{2r}\subset \p(M_{2m+1}^*)$ is
defined in the same way. 
With this notation we have $$A_2=\mathbb G(1, \p^{2m+1})\subset\p^{2m^2+3m}=\p(M_{2m+1})$$
Pl\" ucker  embedded and  $$A_{2m}=V(\Pf)\subset\p^{2m^2+3m}$$ is a hypersurface of degree $m+1$. 

The rational map $\nabla_{\Pf}$ is not  defined along  $A_{2m-2}=\Sing(A_{2m})$ and its  ramification divisor  is given by the determinant of the Jacobian matrix of $\nabla_{\Pf}$, which is $\hess(\Pf)$.  Since $\Pf$ is an irreducible polynomial,
we deduce from \eqref{adjform} that the ramification divisor of $\nabla_{\Pf}$ is supported on $A_{2m}$, yielding
$$\hess(\Pf)=\gamma\cdot \Pf^{(2m+1)(m-1)}$$
with $\gamma\in\mathbb K^*$. This property was  proved in a similar (but not identical) way   by B. Segre in \cite[Theorem 3]{Segre1}.
 By evaluating
the previous identity on particular matrices B. Segre  deduced $\gamma=(-1)^{m}\cdot m$,
see \cite[Teorema 3]{Segre1}.
\medskip

If $[A]\not\in A_{2m},$ then $[A^\#]\not\in C_{2m}$ while for $[A]\in A_{2m}\setminus A_{2m-2}$, $[A^\#]\in C_2$ so that
$$A_{2m}^*=C_2=\mathbb G(1,\p^{2m+1})\subset \p^{2m^2+3m}=\p(M_{2m+2}^*).$$

By the definition of the Gauss map, for every $[A]\in A_{2m}\setminus A_{2m-2}$, we have $\nabla_{\Pf}([A])=[T_{[A]}A_{2m}]$ and
$\overline{\nabla_{\Pf}^{-1}([T_{[A]}A_{2m}])}$
is a $(2m^2-m-1)$-dimensional projective space. 

We are now ready
to prove the next result and its Corollary, whose proofs will be omitted being analogous to those presented above.

\begin{prop}\label{conoGn} Let notation be as above, let $m\geq 2$ and let $[A]\in A_{2m}\setminus A_{2m-2}$. Then
$$\overline{\nabla_{\Pf}(T_{[A]}A_{2m})}$$ is a hypersuface of degree $m$ which is a cone with  vertex 
$$\overline{\nabla_f^{-1}([T_{[A]}A_{2m}])}^\perp=\p^{4m}$$ over the dual of a Grassmann variety   $\mathbb G(1,\p^{2m-1})$.
\end{prop}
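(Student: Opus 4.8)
The plan is to follow exactly the template already used in the proofs of Proposition \ref{conoPnPn} and its symmetric analogue, exploiting the homogeneity of $A_{2m}\setminus A_{2m-2}$ under the natural action of $GL_{2m+2}(\mathbb K)$ by $A\mapsto PAP^t$, which preserves the Pfaffian hypersurface and acts transitively on matrices of a fixed even rank. So it suffices to verify the claim at one conveniently chosen point $[A]\in A_{2m}\setminus A_{2m-2}$, namely the skew-symmetric matrix $A$ of rank $2m$ whose only nonzero entries are $a_{i,i+1}=-a_{i+1,i}=1$ for $i=1,3,\ldots,2m-1$ (pairing up the first $2m$ indices) and with the last row and column zero. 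For this $A$ one computes $A^\#$ directly: since the $(2m)\times(2m)$ principal block is invertible while the last row/column vanish, the adjugate $A^\#$ has a single nonzero $2\times 2$ skew block in the last two coordinates, so $[A^\#]\in C_2$ and the tangent hyperplane $T_{[A]}A_{2m}$ has equation $a_{2m,2m+1}=0$ (up to sign).

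Next I would use the involution $\nabla_{\Pf}^{-1}=\nabla_{\Pf}$ (more precisely, identity \eqref{adjform} restricted to $M_{2m+1}$ shows $\nabla_{\Pf}$ is an involution up to the inessential $\det$-power, exactly as in the $\det(X)$ and $\det(S)$ cases), so that
$$\overline{\nabla_{\Pf}(T_{[A]}A_{2m})}=\overline{\nabla_{\Pf}(V(a_{2m,2m+1}))}=V\!\left(\frac{\partial\, \Pf_{\text{dual}}}{\partial b_{2m,2m+1}}\right),$$
where $\Pf_{\text{dual}}$ is the Pfaffian in the dual skew-symmetric coordinates $b_{i,j}$. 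Now the key computation: the partial derivative of the Pfaffian of a generic $(2m+2)\times(2m+2)$ skew matrix with respect to the entry $b_{2m,2m+1}$ equals, up to sign, the Pfaffian of the $2m\times 2m$ skew-symmetric matrix obtained by deleting rows and columns $2m$ and $2m+1$. This is the standard "Pfaffian Laplace expansion" along a pair of indices. That sub-Pfaffian is a form of degree $m$ in the $\binom{2m}{2}$ variables $b_{i,j}$ with $0\le i<j\le 2m-1$, and it does not involve any of the $2\cdot(2m)=4m$ variables $b_{i,2m}$, $b_{j,2m+1}$. Hence $\overline{\nabla_{\Pf}(T_{[A]}A_{2m})}$ is a hypersurface of degree $m$ which is a cone whose vertex is the linear subspace cut out by those $4m$ coordinates, and whose "base" in the complementary $\p^{\binom{2m}{2}-1}$ is precisely the vanishing locus of the Pfaffian of a generic $2m\times 2m$ skew matrix, i.e. $A_{2m-2}'=(\mathbb G(1,\p^{2m-1}))^*$ inside that smaller space.

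It remains only to identify the vertex with $\overline{\nabla_{\Pf}^{-1}([T_{[A]}A_{2m}])}^\perp=\p^{4m}$: by general Gauss-map theory (recalled just before Proposition \ref{conoGn}) the contact locus $\overline{\nabla_{\Pf}^{-1}([T_{[A]}A_{2m}])}$ is a $(2m^2-m-1)$-dimensional linear space, so its perpendicular is a $\p^{2m^2+3m-(2m^2-m-1)-1}=\p^{4m}$, matching the dimension count $4m-1=$ (projective) dimension of the vertex coordinate subspace; and its defining equations are exactly the $4m$ linear forms $b_{i,2m}$, $b_{j,2m+1}$ just identified, because the contact locus consists of the rank-$2m$ matrices whose kernel pairs with the last two basis vectors in the prescribed way. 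The only real obstacle is the sign/index bookkeeping in the Pfaffian Laplace expansion and in checking that $A^\#$ has the asserted form; everything else is a verbatim transcription of the two earlier proofs, which is why the authors (as they announce) omit it.
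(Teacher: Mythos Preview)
Your approach is exactly the one the paper has in mind: the authors explicitly omit this proof as ``analogous to those presented above'', i.e. to Proposition \ref{conoPnPn}, and you have faithfully transcribed that template to the skew-symmetric setting.

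There is, however, a bookkeeping slip in your variable count that produces the mismatch you yourself flag at the end. The Pfaffian of a generic $(2m+2)\times(2m+2)$ skew matrix is \emph{linear} in each entry $b_{i,j}$ (every perfect matching uses each index once), so after differentiating with respect to $b_{2m,2m+1}$ the resulting sub-Pfaffian does not involve $b_{2m,2m+1}$ either. Hence the variables missing from the sub-Pfaffian are the $2m$ entries $b_{i,2m}$ ($0\le i\le 2m-1$), the $2m$ entries $b_{j,2m+1}$ ($0\le j\le 2m-1$), \emph{and} $b_{2m,2m+1}$ itself: $4m+1$ in all, giving a vertex $\p^{4m}$, which now matches your (correct) computation of $\overline{\nabla_{\Pf}^{-1}([T_{[A]}A_{2m}])}^\perp$. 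A second minor point: your choice of $A$ with $a_{i,i+1}=1$ for $i=1,3,\ldots,2m-1$ pairs indices $1,\ldots,2m$ and leaves $0$ and $2m+1$ in the kernel, not ``the first $2m$ indices'' with ``the last row and column zero''; either fix the index range to $i=0,2,\ldots,2m-2$ or adjust the description of the kernel and of the tangent-hyperplane coordinate accordingly. With these two corrections the argument goes through verbatim.
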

\medskip 

\begin{cor}\label{cor4.8} Let notation be as above, let $m\geq 2$, let $$p\in C_2=\mathbb G(1, \p^{2m+1})\subset\p^{2m^2+3m}=\p(M_{2m+2}^*)$$ and let $G_m\subset\p^{2m^2+3m-1}$ be the projection
of $C_2$ from $p$. Let $q\in A_{2m}\setminus A_{2m-2}$ be such that  $T_qA_{2m}=p^\perp$ and let 
$$F_m=A_{2m}\cap p^\perp\subset p^\perp=\p^{2m^2+3m-1} .$$ Then:
\medskip
\begin{enumerate}
\item[(i)] $F_m\subset\p^{2m^2+3m-1}$ is an irreducible  hypersurface of degree $m+1$ with vanishing hessian and such that $F_m^*=G_m$.
\medskip
\item[(ii)] $Z_{F_m}\subset\p^{2m^2+3m-1}$ is a hypersurface of degree $m$ which is a cone with vertex a $\p^{4m-1}$ over the dual of  a Grassmann variety $\mathbb G(1,\p^{2m-1})$.
\medskip
\item[(iii)] $\codim(F_m^*, Z_{F_m})=2m^2-m-2.$
\end{enumerate}
\end{cor}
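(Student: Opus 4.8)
The plan is to transpose, to the skew-symmetric determinantal setting, the arguments proving Proposition~\ref{conoPnPn} and Corollary~\ref{cor4.2}: the adjoint involution $\nabla_{\Pf}\colon\p(M_{2m+1})\map\p(M_{2m+1}^{*})$ now plays the role of $\nabla_f$, while Proposition~\ref{conoGn} and Lemma~\ref{projectionHess} take the place of Proposition~\ref{conoPnPn} and Lemma~\ref{projectionHess}. I would begin with the irreducibility in (i): the hypersurface $F_m=A_{2m}\cap p^\perp$ is connected, being a hyperplane section of the irreducible hypersurface $A_{2m}=V(\Pf)$ inside a projective space of dimension at least two, and its singular locus is $\big(A_{2m-2}\cap p^\perp\big)\cup\overline{\nabla_{\Pf}^{-1}([T_qA_{2m}])}$, the union of the ``old'' singular locus $A_{2m-2}=\Sing(A_{2m})$ cut by $p^\perp$ with the Gauss contact locus $\p^{2m^2-m-1}$ of $T_qA_{2m}$ along $A_{2m}$. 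Since $\dim F_m=2m^2+3m-2$ while $\dim(A_{2m-2}\cap p^\perp)\le 2m^2+3m-6$ and $2m^2-m-1\le 2m^2+3m-4$, the variety $F_m$ is non-singular in codimension one, hence normal by Serre's criterion and therefore irreducible, of degree $m+1$ (that of $\Pf$).

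Next I would prove (ii). Applying Lemma~\ref{projectionHess} with $X=A_{2m}$, $H=p^\perp=T_qA_{2m}$ and $h=p$, the commutative diagram yields, up to the canonical identifications, the equality of rational maps $\nabla_{F_m}=\pi_p\circ\big(\nabla_{\Pf}\big)|_{p^\perp}$, so that
$$Z_{F_m}=\overline{\nabla_{F_m}(p^\perp)}=\pi_p\!\left(\overline{\nabla_{\Pf}(T_qA_{2m})}\right).$$
By Proposition~\ref{conoGn} the variety $\overline{\nabla_{\Pf}(T_qA_{2m})}$ is a hypersurface of degree $m$, a cone with vertex $\p^{4m}=\overline{\nabla_{\Pf}^{-1}([T_qA_{2m}])}^{\perp}$ over the dual of $\mathbb G(1,\p^{2m-1})$; since the contact locus $\overline{\nabla_{\Pf}^{-1}([T_qA_{2m}])}$ is contained in $T_qA_{2m}=p^\perp$, the point $p=\nabla_{\Pf}(q)$ lies in $\vert\big(\overline{\nabla_{\Pf}(T_qA_{2m})}\big)$, so projection from $p$ preserves both the degree $m$ and the base $\mathbb G(1,\p^{2m-1})^{*}$ and lowers the vertex to a $\p^{4m-1}$, which is (ii). In particular $Z_{F_m}\subsetneq(\p^{2m^2+3m-1})^{*}$, hence $\hess_{F_m}=0$ by the key formula~\eqref{KeyFormula}.

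It remains to identify $F_m^{*}$ and to prove (iii). Since $2m+1$ is odd, $C_2=\mathbb G(1,\p^{2m+1})=A_{2m}^{*}$ has no dual defect, and its projection $G_m=\pi_p(C_2)$ is a non-degenerate variety of dimension $4m$ that again has no dual defect. A hyperplane of $\p^{2m^2+3m-1}$ tangent to $G_m$ at a general point pulls back along $\pi_p$ to a hyperplane through $p$ tangent to $C_2$, whence $G_m^{*}\subseteq C_2^{*}\cap p^\perp=A_{2m}\cap p^\perp=F_m$ (compare \cite[Exercise~1.5.22]{Russo}); as $G_m^{*}$ and $F_m$ are irreducible hypersurfaces, $F_m=G_m^{*}$, and therefore $F_m^{*}=G_m$, while $F_m$ is not a cone because $G_m$ is non-degenerate. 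Finally $\dim F_m^{*}=\dim G_m=4m$, whereas $Z_{F_m}$, being a cone with vertex a $\p^{4m-1}$ over the dual of $\mathbb G(1,\p^{2m-1})\subset\p^{2m^2-m-1}$, has dimension $(4m-1)+(2m^2-m-2)+1=2m^2+3m-2$, so $\codim(F_m^{*},Z_{F_m})=2m^2-m-2$, which is (iii).

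The main obstacle is the codimension-one non-singularity of $F_m$: one has to pin down $\Sing(F_m)$ exactly as the union of $A_{2m-2}\cap p^\perp$ and the Gauss contact locus of $T_qA_{2m}$ along $A_{2m}$, and to control its dimension inside $F_m$. A secondary point requiring care is that projecting from the point $p$, which lies \emph{on} $C_2$ rather than off it, leaves unchanged the dimension and degree of $\overline{\nabla_{\Pf}(T_qA_{2m})}$, as well as the absence of dual defect of $C_2$; everything else is formal manipulation with polarity and with projections of cones.
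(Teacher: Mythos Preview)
Your proposal is correct and follows essentially the same route as the paper's own argument for Corollary~\ref{cor4.2} (to which the proof of Corollary~\ref{cor4.8} is explicitly referred): irreducibility of $F_m$ via connectedness plus Serre's criterion, identification $F_m^{*}=G_m$ via the absence of dual defect for the internal projection and \cite[Exercise~1.5.22]{Russo}, and the computation of $Z_{F_m}$ by combining Proposition~\ref{conoGn} with Lemma~\ref{projectionHess} after observing that $p$ lies in the vertex of the cone $\overline{\nabla_{\Pf}(T_qA_{2m})}$. Your treatment is in fact slightly more careful than the paper's sketch: you describe $\Sing(F_m)$ as the union of $A_{2m-2}\cap p^\perp$ with the Gauss contact locus (the paper's parenthetical for $R_n$ mentions only the contact locus), and you flag explicitly the two points---non-singularity in codimension one and preservation of the relevant invariants under projection from $p\in C_2$---that genuinely require verification.
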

\medskip

\begin{rem}{\rm The three series of hypersurfaces $R_n$, $Q_n$ and $F_m$ are of Gordan-Noether-Perazzo-Permutti-CRS type and such that the duals of their polar images are of
the same type of their duals. Indeed, the first property easily  follows from Laplace formula and by the determinantal description of their equation in a suitable coordinate system
while $Z^*$ is of the same type by part (ii) of the previous Corollaries~\ref{cor4.2}, \ref{cor4.5}, and \ref{cor4.8}. }
\end{rem}

\end{document}